\documentclass[12pt]{article}
\usepackage{graphicx}
\usepackage{setspace}
\usepackage{amssymb}
\usepackage{amsmath}
\usepackage{amsthm}
\interdisplaylinepenalty=2500
\usepackage{hyperref}
\usepackage{bbm} 
\usepackage{bm} 
\usepackage{caption}
\usepackage{subcaption}
\usepackage{cite} 
\usepackage[all]{xy}
\usepackage{mathtools}





\setlength{\textwidth}{6.5in}
\setlength{\textheight}{8.5in}

\newtheorem{theorem}{Theorem}

\newtheorem{lemma}[theorem]{Lemma}

\def\E{{\rm e}}


\begin{document}


\title{Some adjacency-invariant spaces on products of short cycles}
\author{Jeffrey A. Hogan\\
School of Mathematical and Physical Sciences\\
University of Newcastle\\
Callaghan NSW 2308 Australia\\
email: {\tt jeff.hogan@newcastle.edu.au} \and
Joseph D. Lakey\\
College of Arts and Sciences\\
New Mexico State University\\
Las Cruces, NM 88003--8001\\
email: {\tt jlakey@nmsu.edu}}

\abstract{ We study certain spaces of vertex functions on the Cayley graphs  $\mathcal{C}_m^N$ of  $\mathbb{Z}_m^N$, where $m=3,4,5$, 
that are invariant under the adjacency operator that 
maps a value at a given vertex to each of its neighbors.  An application to spatio--spectral limiting---an analogue of time and band limiting---is also discussed.
}

\maketitle

\section{Introduction}

We study  spaces of vertex functions on the Cayley graphs  $\mathcal{C}_m^N$ of  $\mathbb{Z}_m^N$, where $m=3,4,5$, 
that are invariant under the adjacency operator that 
maps a value at a given vertex to each of its neighbors.  In spectral graph theory, \emph{graph filters} are typically described
as polynomials in the adjacency operator, e.g., \cite{sandryhaila_moura_2013}. Thus, adjacency invariant spaces are ones on which filters can, in principle, admit
simple implementations. 

This work extends our work  \cite{Hogan20172,hogan2018spatiospectral} in which certain adjacency-invariant spaces on Boolean cubes $\mathcal{B}_N$---Cayley graphs on $\mathbb{Z}_2^N$---were identified.
These spaces were shown to contain eigen-spaces of \emph{spatio--spectral limiting operators}---analogues on $\mathbb{Z}_2^N$ of classical time- and band-limiting operators, e.g., \cite{hogan_lakey_tbl}
that are in turn relevant to uncertainty principles on more general graphs, e.g., \cite{benedetto_koprowski_2015,tsitsvero_etal_2015}.
Broadly, these spaces were defined in terms of level sets $\Sigma_r$ of path distance $r$ to the identity of $\mathbb{Z}_2^N$.  
We defined a \emph{subadjacency} operator
$A_+$ of the adjacency operator $A$ that maps functions supported in $\Sigma_r$ to ones supported in $\Sigma_{r+1}$, and its adjoint $A_-$. We showed that spaces of the form 
$\mathcal{V}_r=\{\sum_{k=0}^{N-r}  c_k A_+ f, f\in\mathcal{W}_r , c_0,\dots, c_{N-r}\in \mathbb{R} \,{\rm or}\, \mathbb{C}\}$, where $\mathcal{W}_r$ 
consists of those vertex functions 
$f$ supported in $\Sigma_r$ in the kernel of $A_-$, are invariant under the adjacency operator $A$.  Moreover,  $\mathcal{V}_r\simeq
\mathcal{W}_r\times \mathbb{R}^{N+1-r}$ and, as an operator on  $\mathcal{V}_r$, $A$ depends only on the factor $\mathbb{R}^{N+1-r}$
and can thus be expressed as multiplication on $\mathbb{R}^{N+1-r}$ by a \emph{ level matrix} of size $(N+1-r)$. 
The action of any polynomial graph filter on
$\mathcal{B}_N$ can then be expressed on $\mathcal{V}_r$ as a polynomial of this level matrix.  On $\mathcal{B}_N$, any vertex function can
be expressed, in any neighborhood of the identity, as a sum of components in spaces $\mathcal{V}_r$. Thus such decomposition provides 
a  tool to analyze graph filters on $\mathcal{B}_N$.

After setting notation, we establish corresponding results in $\mathcal{C}_m^N$ 
for the cases $m=3,4,5$ in Sects.~\ref{sec:c3}--\ref{sec:c5} respectively,  
showing that the corresponding adjacency matrices are invariant on suitable analogues of the spaces $\mathcal{V}_r$ outlined above.
We comment on applications to  spatio--spectral limiting in Sect.~\ref{sec:ssl}
and briefly comment on extensions to larger values of $m$ in Sect.~\ref{sec:conclusions}.

\section{Background and notation}
Let $\Gamma$ be a finite abelian group and $S$ a symmetric subset (i.e., $S=-S$) of generators of $\Gamma$, meaning that each $\gamma\in\Gamma$
can be written $\gamma=s_1+\dots+s_n$ for some $n$ with each $s_i\in S$.  
The Cayley graph $(\Gamma,S)$ is then the (unweighted, symmetric)  graph whose vertices are the elements of $\Gamma$ and whose edges correspond to ordered pairs
$(\gamma_1,\gamma_2)$ such that $\gamma_1-\gamma_2\in S$.  

In this paper we study certain properties of Cayley graphs $\mathcal{C}_m^N$ of the groups 
$\mathbb{Z}_m^N$, specifically for $m=3,4,5$.  The one-dimensional cycles  $\mathcal{C}_3, \mathcal{C}_4, \mathcal{C}_5 $ 
are depicted in Fig.~\ref{fig:short_cycles}. 
The cycle group $\mathbb{Z}_m$, the group of \emph{integers modulo $m$}, can be identified with
the set of the first $m$ integers $\{0,1,\dots, m-1\}$ with addition modulo $m$. The identity is the (equivalence class of the) integer $0$ and the inverse of $k\in \{1,\dots, m-1\}$
is  $m-k$. We will express the element $m-k$ ($k\leq \lfloor m /2\rfloor$) instead as ``$-k$'' to emphasize that it is the
$k$th-order sum of the inverse $-1$ of the generator 1.  
We write elements of $\mathbb{Z}_m^N$ as $v=(\ell_1,\dots, \ell_N)$, $\ell_i\in \{-\lfloor (m-1)/2\rfloor,\dots, \lfloor m/2\rfloor\}$. 
Addition in $\mathbb{Z}_m^N$ is coordinate-wise addition in
$\mathbb{Z}_m$.

\begin{figure}[tbhp]
\begin{centering}
\vspace{1cm}
\
$\xymatrix@C=0.1em{
-1 \ar@{-}[rr] \ar@{-}[rd] & & \ar@{-}[ld] 1 &\\
& 0 \\
}$
\qquad
$\xymatrix@C=0.1em{
&2 \ar@{-}[ld] \ar@{-}[rd] \\
-1 \ar@{-}[rd]& & 1 \ar@{-}[ld]\\
& 0 \\
}$
\qquad
$\xymatrix@C=0.1em{
-2 \ar@{-}[rr] \ar@{-}[d] & & \ar@{-}[d] 2 &\\
-1 \ar@{-}[rd]& & 1 \ar@{-}[ld]\\
& 0 \\
}$
\begin{small}
\caption{\label{fig:short_cycles} Graphic representation of cycles $\mathcal{C}_3$ (left), $\mathcal{C}_4$ (middle) and $\mathcal{C}_5$}
\end{small}
\end{centering}
\end{figure}
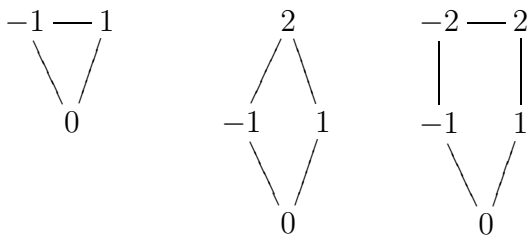

We denote by $e_k\in\mathbb{Z}_m^N$  the element equal to $1\in \mathbb{Z}_m$ in the $k$th coordinate
and equal to  $0\in \mathbb{Z}_m$ in the other coordinates. 
We denote by $\mathcal{C}_m^N$ the Cayley graph on $\mathbb{Z}_m^N$ with generators $S=\{\pm e_k\}_{k=1}^N$. Two vertices $v,w$
are adjacent, denoted $v\sim w$, if $v-w=\pm e_k$ for some $k=1,\dots, N$. 
It will be useful to define the \emph{$k$-reflection} of a vertex $v=(\ell_1,\dots, \ell_N)$ to be the vertex $\tilde{v}_k$ whose $j$th coordinate is $\ell_j$
if $j\neq k$ and whose $k$th coordinate is $-\ell_k$.   The \emph{$k$-reflection} operator $\rho_k$ is defined on vertex functions $f$
by $(\rho_k f)(v)=f(\tilde{v}_k)$.
 To an element $v=(\ell_1,\dots, \ell_N)\in  \{-\lfloor (m-1)/2\rfloor,\dots, \lfloor m/2\rfloor\}^N$ we define its
\emph{level vector} to be the vector whose $k$th entry is the $k$th \emph{level}, $d_k(v)=|\ell_k|$.  The path distance
 from $v$ to the origin is $d(v):=\sum_{k=1}^N d_k(v)$.  
  If $0<d_k(v)<\lfloor{m/2}\rfloor$ 
then $v_k^{-}$ denotes the (unique) vertex satisfying $d_k(v_k^{-})=d_k(v)-1$ and $d_j(v_k^{-})=d_j(v)$ if $j\neq k$. 
Similarly, $v_k^{+}$ denotes the (unique) vertex satisfying $d_k(v_k^{+})=d_k(v)+1$ and $d_j(v_k^{+})=d_j(v)$ if $j\neq k$.
Observe that if $v\sim w$ and $d(v)<d(w)$ then  there is a unique coordinate $k$ such that $d_k(w)=d_k(v)+1$. If $d_k(v)>0$ then
$w=v_k^+$ (and if $d_k(w)<\lfloor m/2\rfloor$ 
then $v=w_k^-$). Otherwise $d_k(v)=0$ and we regard $v_k^+=v\pm e_k$ as a pair of vertices. 
Similarly, if $d_k(w)=\lfloor m/2\rfloor$ 
and $m$ is even we also regard $w_k^-=w\mp e_k$ as a vertex pair.
As an example in $\mathcal{C}_5^2$, $(-1,1)_1^+=(-2,1)$ and $(-1,1)_1^-=(0,1)$ while $(0,1)_1^+=(\pm 1,1)$.
Two vertices $v,w$ are equi-level
if they have an equal number of coordinates at each level. For example, $v=(0,1,2,1)$ and $w=(-1,0,1,2)$ are equi-level in $\mathcal{C}_4^4$.
Set $M=\lfloor m/2\rfloor$. The level set $\Sigma_{q_1,\dots, q_{M}}$ is the set of all vertices that have $q_\ell$ coordinates at level $\ell$, $\ell=1,\dots, M$.

The graph Laplacian is the operator $(L f)(v)=\sum_{w\sim v} f(v)-f(w)$ where $f$ is a  complex-valued function defined on $\mathbb{Z}_m^N$.
When $N=1$, the eigenvectors are $E_k=\{\E^{2\pi i k\ell/m}\}_{\ell=0 }^{m-1}$, with eigenvalue $2\cos (2\pi k/N)$. 
Given the Cartesian product of two graphs with an appropriate vertex ordering on the product, the eigenvectors of the product Laplacian are 
the tensor products of eigenvectors of the Laplacians on the separate factors, whose eigenvalues are the sums of the corresponding factor eigenvalues., e.g., \cite{kurokawa2017multidimensional}.
The graph Fourier transform is the operator that maps a vertex function to its coefficients in a basis of Laplacian eigenvectors.
For any $r$-regular graph (i.e., each vertex is adjacent to $r$ other vertices), the Laplacian can also be written as $L=r I-A$ 
where the adjacency operator $A$ 
is defined by $(A f)(v)=\sum_{w\sim v} f(w)$.

Once an indexing of the $m^N$ vertices of $\mathcal{C}_m^N$ is fixed, the matrix $A$ of the adjacency operator with respect to this ordering
can be defined as the matrix whose $(i,j)$-entry is equal to one precisely when $v_i$ and $v_j$ are adjacent vertices.    In what follows we
will not distinguish notationally between the operator $A$ versus a matrix realization $A$, and simply write $A$ in either case.
The same will apply to \emph{partial} adjacencies---restrictions of adjacency to subsets of (ordered) vertex pairs---and compositions of them.
We define the \emph{outer adjacency} operator ${A}_+$
 by $({A}_+ f)(v)=\sum_{w\sim v, d(v)<d(w)} f(w)$.
If vertices
are ordered in nondecreasing order of path distance $d$ to the origin, the corresponding matrix $A_+$ of $A$ is lower triangular.
We define ${A}_-$ to be the adjoint of ${A}_+$. 
Finally, we define the \emph{neutral adjacency operator}
${A}_0$ by $({A}_0f)(v)=\sum_{w\sim v,\, d(w)=d(v)} f(w)$.

The commutator $C=  [{A}_-,\, {A}_+]={A}_- {A}_+ - {A}_+ {A}_-$
will be important. 
\begin{lemma} \label{lem:samelevel} The commutator $C=A_-A_+-A_+A_-$ has its support in pairs 
of vertices that have equal level vectors. 
\end{lemma}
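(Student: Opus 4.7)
The plan is to decompose $A_+$ and $A_-$ along coordinates and exploit the tensor-product structure of $\mathcal{C}_m^N$ as the Cartesian product of $N$ copies of $\mathcal{C}_m$.

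For each $k \in \{1,\dots,N\}$ I would introduce the coordinate-$k$ versions $A_+^{(k)}$ and $A_-^{(k)}$ of the outer and lower adjacencies, defined by $(A_+^{(k)} f)(v) = \sum_{w} f(w)$ with the sum taken over $w$ such that $w - v = \pm e_k$ and $d(w) > d(v)$, and $A_-^{(k)} = (A_+^{(k)})^\ast$. Because every edge of $\mathcal{C}_m^N$ changes exactly one coordinate and $d = \sum_k d_k$, a move along $\pm e_k$ strictly raises $d$ if and only if it strictly raises $d_k$. Hence
\begin{equation*}
A_+ = \sum_{k=1}^N A_+^{(k)}, \qquad A_- = \sum_{k=1}^N A_-^{(k)},
\end{equation*}
and under the identification $\ell^2(\mathbb{Z}_m^N) \simeq \bigotimes_{k=1}^N \ell^2(\mathbb{Z}_m)$, each $A_\pm^{(k)}$ is the one-dimensional $A_\pm$ on $\mathcal{C}_m$ inserted in slot $k$, tensored with identities elsewhere.

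From this tensor form it is immediate that $A_-^{(k)}$ and $A_+^{(\ell)}$ commute for $k \neq \ell$, since they act on disjoint factors. Expanding the commutator therefore collapses to
\begin{equation*}
[A_-, A_+] \;=\; \sum_{k=1}^N [A_-^{(k)}, A_+^{(k)}],
\end{equation*}
and each summand acts nontrivially only in the $k$th tensor slot.

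The problem is thus reduced to the one-dimensional statement that on $\mathcal{C}_m$ the commutator $[A_-, A_+]$ is supported on pairs $(a,b)$ with $|a|=|b|$. For this I would observe that any edge of $\mathcal{C}_m$ contributing to $A_+$ or $A_-$ strictly changes the level by $\pm 1$ (adjacent vertices of equal level, which only occur for odd $m$ at the top level $M$, contribute instead to $A_0$). Consequently both $A_-A_+$ and $A_+A_-$ preserve the level function $|\cdot|$, and hence so does their difference. Lifting back, the entry $[A_-^{(k)}, A_+^{(k)}](v,u)$ vanishes unless $v$ and $u$ agree in every coordinate other than $k$ and satisfy $|\ell_k(v)| = |\ell_k(u)|$; in either case $v$ and $u$ have equal level vectors. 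Summing over $k$ yields the lemma. The only step requiring real attention is the one-dimensional claim, which can alternatively be verified directly for $m=3,4,5$ from the explicit $m\times m$ matrices for $A_\pm$.
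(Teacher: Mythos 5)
Your proof is correct, and it formalizes the lemma by a route that differs in packaging from the paper's. The paper does not give a formal proof at all: it follows the statement with a remark that exhibits, for a fixed vertex pair differing by $+1$ in one coordinate and $-1$ in another, the matching two-edge paths under $A_-A_+$ and $A_+A_-$, so that these cross-coordinate contributions cancel in the commutator; the same-coordinate (equal level vector) case is treated as self-evident. You instead exploit the Cartesian-product structure: writing $A_\pm=\sum_k A_\pm^{(k)}$ with each $A_\pm^{(k)}$ a one-dimensional operator in the $k$th tensor slot (valid because a step along $\pm e_k$ raises $d$ iff it raises $d_k$), the cross terms $[A_-^{(k)},A_+^{(\ell)}]$, $k\neq\ell$, vanish by exact commutativity of operators on distinct factors, and the diagonal terms reduce to the one-dimensional fact that $a_+$ and $a_-$ shift the level by exactly $\pm 1$, so $a_-a_+$ and $a_+a_-$ are each supported on equal-level pairs (including the reflection pairs such as $1,-1$, which is why the conclusion is ``equal level vectors'' rather than the diagonal). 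The two arguments rest on the same cancellation phenomenon, but yours is a complete, uniform-in-$m$ proof with a clean reduction to dimension one, whereas the paper's path-pairing viewpoint is the one it reuses later (Theorems~\ref{thm:commutator_3}, \ref{thm:commutator_4}, \ref{thm:commutator_C5}) where the actual multiplicities of the same-coordinate paths, not just the support, are needed; your tensor reduction would deliver those multiplicities as well, from the explicit $m\times m$ commutators $[a_-,a_+]$.
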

Specifically, we think of ${A}_+$ as mapping values at vertices of a given level to vertices of an outer level and ${A}_-$ 
does the reverse. If $f=\bm{1}_v$ is supported at a single vertex $v\sim (\ell_1,\dots,\ell_N)$ and $w=(\ell_1',\dots,\ell_N')$ 
is an equi-level vertex such that for fixed $j\neq k$, $\ell_j'=\ell_j+1$ and $\ell_k'=\ell_k-1$, then 
$({A}_-{A}_+\bm{1}_v)(w)=({A}_+{A}_-\bm{1}_v)(w)$.

\section{Adjacency-invariant spaces on $\mathcal{C}_3^N$\label{sec:c3}}
The graph $\mathcal{C}_3^N$ is an $N$-fold product of  triangles. Let $\Sigma_r$ denote the set of vertices $v$ such that $d(v)=r$.
These are the vertices that have $\pm 1$ in $r$ coordinates and are null in the other $N-r$ coordinates. Thus $\Sigma_r$  has
$2^r\binom{N}{r}$ vertices. The identity $\sum_{r=0}^N 2^r\binom{N}{r}=3^N$ reflects that the vertices of $\mathcal{C}_3^N$ 
are the disjoint union of $\Sigma_r$, $r=0,\dots, N$. The outer adjacency operator maps vectors supported on $\Sigma_r$ to ones
supported on $\Sigma_{r+1}$. In particular, $A_+^{k}f=0$ if $f$ is supported in $\Sigma_r$ and $k>N-r$. 
Two vertices  $v,w\in\Sigma_r$ that are equal in all but one coordinate are adjacent to one another. 
Specifically, if $v,w\in\Sigma_r$ and $v-w=\pm e_k$ 
then  $v\sim w$ and $w=\tilde{v}_k$. We call $w$ the $k$th neutral neighbor of $v$. The neutral adjacency operator on $\mathcal{C}_3^N$
is $(A_0 f)(v)=\sum_{k: d_k(v)=1} f(\tilde{v}_k)$.

\begin{theorem} \label{thm:commutator_3} ${C}+{A}_0$ is diagonal on the space of vectors supported on $\Sigma_r$, the vertices of distance $r$ from the origin in $\mathcal{C}_3^N$.
Specifically, for such $f$, 
${C}f=(2N-3r) f-{A}_0 f$.
\end{theorem}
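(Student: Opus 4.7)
The plan is to test the identity on a basis of delta functions: by linearity it suffices to check that $(C + A_0)\mathbf{1}_v = (2N - 3r)\mathbf{1}_v$ for every $v \in \Sigma_r$, which amounts to comparing the two sides at an arbitrary vertex $w$. By Lemma~\ref{lem:samelevel} the output $C\mathbf{1}_v$ is supported on vertices equi-level with $v$, and inspection of the formula for $A_0$ on $\mathcal{C}_3^N$ shows that $A_0\mathbf{1}_v$ is supported there as well. In $\mathcal{C}_3^N$, an equi-level vertex $w$ of $v$ has nonzero entries in precisely the same coordinates as $v$ (possibly with flipped signs), i.e.\ $w=\tilde v_S$ for some subset $S$ of the $r$ nonzero coordinates of $v$.

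First I would handle the diagonal, $w = v$. Reading off the definitions, $A_+\mathbf{1}_v$ is the indicator function of the $2(N-r)$ outer neighbors of $v$---one per sign choice at each of the $N-r$ zero coordinates---while $A_-\mathbf{1}_v$ is the indicator of the $r$ inner neighbors, obtained by zeroing each nonzero coordinate of $v$. Hence $(A_-A_+\mathbf{1}_v)(v) = 2(N-r)$ and $(A_+A_-\mathbf{1}_v)(v) = r$, so $(C\mathbf{1}_v)(v) = 2N - 3r$; since $v$ is not its own neutral neighbor, $(A_0\mathbf{1}_v)(v) = 0$.

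For the off-diagonal equi-level vertices $w=\tilde v_S$ with $S\neq\emptyset$ I would split according to $|S|$. When $|S| = 1$, say $S = \{k\}$ and $w = \tilde{v}_k$, any common outer neighbor of $v$ and $w$ would agree with both at coordinate $k$, forcing $v_k = w_k$, a contradiction, so the common outer-neighbor count is zero. On the other hand, zeroing out coordinate $k$ produces one common inner neighbor, and a brief case check rules out any other, so $(A_+A_-\mathbf{1}_v)(w) = 1$ and $(C\mathbf{1}_v)(w) = -1$. Meanwhile $(A_0\mathbf{1}_v)(w) = 1$ since $w$ is exactly the $k$th neutral neighbor of $v$, and the sum vanishes. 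When $|S| \geq 2$, $v$ and $w$ differ at two or more coordinates, so they share neither a common outer nor a common inner neighbor (each such neighbor differs from its parent at only one coordinate), giving $(C\mathbf{1}_v)(w) = 0$; and $w$ is not a neutral neighbor of $v$, so $(A_0\mathbf{1}_v)(w) = 0$ as well.

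The main obstacle, and the only step that requires real care, is the $|S| = 1$ case: one must simultaneously verify that the common outer-neighbor count vanishes, that the common inner-neighbor count is exactly one, and that the neutral-adjacency contribution is $+1$, so that these three numbers conspire to cancel. The remaining bookkeeping is routine, and Lemma~\ref{lem:samelevel} removes the need to examine any $w$ whose level vector differs from that of $v$.
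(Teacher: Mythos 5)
Your proof is correct and follows essentially the same route as the paper: both arguments count the two-step out-and-back versus in-and-back/in-and-out paths, using Lemma~\ref{lem:samelevel} to dispose of cross-coordinate contributions, obtaining $2(N-r)$ from $A_-A_+$, $r$ from $A_+A_-$, and the reflection paths producing the $-A_0$ term; you merely organize the count entrywise on delta functions rather than evaluating $(Cf)(v)$ directly. One small caution: for $|S|=2$ your parenthetical alone does not rule out common neighbors (e.g.\ in two coordinates, $(1,1)$ and $(-1,-1)$ share the \emph{neutral} neighbor $(-1,1)$); what is true and needed is that there is no common \emph{outer} or \emph{inner} neighbor, which follows from the same agreement argument you already used for $|S|=1$ (a common outer neighbor would agree with both $v$ and $w$ on every flipped coordinate, and a common inner neighbor would force a zero in a coordinate where the other vertex is nonzero).
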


\begin{proof} Fix $v\in \Sigma_r$. In view of Lem.~\ref{lem:samelevel}, ${C}f(v)$ takes its input from vertices that differ from $v$ in at most a single coordinate, 
meaning ${C}$ will have a diagonal component at $v$
and, possibly, a neutral neighbor component. Given a vertex $v$, for each null coordinate of $v$, there are two edges to vertices in $\Sigma_{r+1}$ that are level one in that coordinate and otherwise equal to $v$, so there are $2(N-r)$
paths from $v$ to a vertex in $\Sigma_{r+1}$ and back to $v$ under ${A}_-{A}_+$. On the other hand, for each level-one coordinate 
$k$ of $v$, there is a path from $v$ to the vertex $v_k^-$ that is null in that coordinate and equal to $v$ in all other coordinates,
then back to $v$, and another path through $v_k^-$ 
to $\tilde{v}_k$, under ${A}_+{A}_-$.
Altogether then, ${C}(f)(v)=({A}_-{A}_+-{A}_+{A}_-)(f)(v)= 2(N-r) f(v)
-[r f(v)+\sum_{k: d_k(v)=1} f(\tilde{v}_k)]=(2N-3r)f(v)-({A}_0 f)(v)$. This proves the theorem.
\end{proof}

\begin{lemma}  Let $f$ be a vertex function supported in $\Sigma_r$ such that for each vertex $v\in\Sigma_r$, 
$f$ is $k$-symmetric in $s$ of the level-one coordinates of $v$
and $k$-antisymmetric in the other $r-s$ level-one coordinates. Then $f$ is an eigenvector of ${A}_0$ with eigenvalue
$2s-r$.
\end{lemma}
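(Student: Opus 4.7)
The plan is to compute $A_0 f$ pointwise and invoke the pointwise symmetry hypothesis to simplify each summand. Since $f$ is supported in $\Sigma_r$ and the neutral adjacency operator on $\mathcal{C}_3^N$ was described above as $(A_0 f)(v) = \sum_{k:\, d_k(v)=1} f(\tilde v_k)$, every term in the sum is evaluated at a vertex $\tilde v_k$ obtained from $v$ by flipping the sign of one of its level-one coordinates. Crucially, $\tilde v_k$ still lies in $\Sigma_r$, so staying in the support of $f$ is not an issue and we genuinely only need to understand the values of $f$ on reflections.

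First I would unpack the hypothesis. The statement ``$f$ is $k$-symmetric (resp.\ $k$-antisymmetric) in coordinate $k$'' means $f(\tilde v_k) = f(v)$ (resp.\ $f(\tilde v_k) = -f(v)$), i.e., $\rho_k f = \pm f$ when one restricts to the slice where coordinate $k$ is at level one. With $s$ of the $r$ level-one coordinates of $v$ contributing symmetrically and the remaining $r-s$ contributing antisymmetrically, each of the $s$ symmetric terms contributes $+f(v)$ to the sum and each of the $r-s$ antisymmetric terms contributes $-f(v)$.

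Summing gives
\begin{equation*}
(A_0 f)(v) \;=\; \sum_{k:\, d_k(v) = 1} f(\tilde v_k) \;=\; s\, f(v) - (r-s) f(v) \;=\; (2s - r) f(v),
\end{equation*}
which holds for every $v \in \Sigma_r$ and holds trivially (both sides vanish) for $v \notin \Sigma_r$. This identifies $f$ as an eigenvector of $A_0$ with eigenvalue $2s-r$.

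The only mild subtlety, and the one I would be careful to address, is the implicit uniformity built into the hypothesis: the integer $s$ does not depend on $v$, so the assumption is really that the pattern of symmetric versus antisymmetric level-one coordinates is consistent across the support of $f$ and yields the same count $s$ at every vertex of $\Sigma_r$. Once this uniformity is recorded, the argument is a one-line evaluation, with no serious combinatorial obstacle.
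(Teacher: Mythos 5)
Your proof is correct and follows essentially the same route as the paper's: evaluate $(A_0 f)(v)=\sum_{k:\,d_k(v)=1} f(\tilde v_k)$ and use the symmetry/antisymmetry hypothesis to turn the $s$ symmetric terms into $+f(v)$ and the $r-s$ antisymmetric ones into $-f(v)$, giving $(2s-r)f(v)$. Your remark about the uniformity of $s$ across vertices is a reasonable clarification but not a departure from the paper's argument.
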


\begin{proof} Each vertex  $v\in \Sigma_r$ has $r$ coordinate neutral neighbors, namely $\{\tilde{v}_k: d_k(v)=1\}$ and 
\begin{equation*}({A}_0f)(v)=\sum_{k:d_k(v)=1} f(\tilde{v}_k)=s f(v)-(r-s) f(v)=(2s-r) f(v)\, 
\end{equation*}
by hypothesis.
\end{proof}

The possible $A_0$-eigenvalues $\lambda$ of $f$ on $\Sigma_r$ are $(2s-r)$, $s=0,1\dots, r$. 

\begin{lemma}   \label{lem:eigen_outer_3} Let $f$ be a $\lambda$-eigenvector of ${A}_0 $ supported in $\Sigma_r$. 
Then ${A}_+ f$ is a $(\lambda+1)$-eigenvector of ${A}_0 $ supported in  $\Sigma_{r+1}$. 
\end{lemma}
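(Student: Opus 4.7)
The plan is to reduce the lemma to the operator identity $A_0 A_+ = A_+ A_0 + A_+$ on functions supported in $\Sigma_r$. Once this is in hand, applying $A_0 A_+$ to a $\lambda$-eigenvector $f$ of $A_0$ yields
\[
A_0 A_+ f = A_+ A_0 f + A_+ f = \lambda A_+ f + A_+ f = (\lambda+1) A_+ f,
\]
while the fact that $A_+ f$ is supported in $\Sigma_{r+1}$ is already recorded at the start of Section \ref{sec:c3}. So the entire content of the lemma is the pointwise verification of the commutation relation on $\Sigma_{r+1}$.

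To carry this out, fix $w \in \Sigma_{r+1}$ and let $K(w) = \{k : d_k(w)=1\}$, a set of size $r+1$. I would use the explicit formulas $(A_+ g)(u) = \sum_{k \in K(u)} g(u_k^-)$ for $u \in \Sigma_{r+1}$ with $g$ supported in $\Sigma_r$, and $(A_0 h)(u) = \sum_{k\in K(u)} h(\tilde u_k)$. Expanding $(A_0 A_+ f)(w) = \sum_{k \in K(w)} \sum_{j \in K(\tilde w_k)} f\bigl((\tilde w_k)_j^-\bigr)$ and using $K(\tilde w_k) = K(w)$, I would split the double sum into the diagonal $j=k$ and off-diagonal $j\ne k$ pieces. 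The $j = k$ piece collapses because $(\tilde w_k)_k^- = w_k^-$ (zeroing the already-reflected coordinate recovers $w_k^-$), so it contributes exactly $\sum_{k\in K(w)} f(w_k^-) = (A_+ f)(w)$. The off-diagonal piece consists of terms $f(u_{k,j})$, where $u_{k,j}$ is the vertex obtained from $w$ by negating the $k$-coordinate and zeroing the $j$-coordinate. Expanding $(A_+ A_0 f)(w) = \sum_{k \in K(w)} \sum_{j \in K(w_k^-)} f\bigl(\widetilde{(w_k^-)}_j\bigr)$ with $K(w_k^-) = K(w)\setminus\{k\}$, the summands are $f(u'_{k,j})$ where $u'_{k,j}$ is obtained from $w$ by zeroing the $k$-coordinate and negating the $j$-coordinate. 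The combinatorial heart of the argument is the observation that $u'_{k,j} = u_{j,k}$, which is the statement that reflection and ``descent'' in \emph{different} coordinates commute: $(\tilde w_k)_j^- = \widetilde{(w_j^-)}_k$ for $j \ne k$. A relabeling of the summation variables then shows the two off-diagonal sums are equal, yielding $(A_0 A_+ f)(w) - (A_+ A_0 f)(w) = (A_+ f)(w)$.

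The only real obstacle is the bookkeeping: one must keep straight which coordinate is being reflected and which is being sent to zero at each stage, and recognize that the ``extra'' contribution $A_+ f$ arises precisely from the diagonal $j=k$ terms on the $A_0 A_+$ side---terms which have no counterpart on the $A_+ A_0$ side because $A_0$ acting in $\Sigma_r$ cannot reflect a coordinate that has already been zeroed. Modulo this careful indexing, the proof is essentially a one-line calculation once the commutation relation is identified.
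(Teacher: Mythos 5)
Your proof is correct and follows essentially the same route as the paper's: the same expansion of $(A_0A_+f)(w)$ into a double sum over level-one coordinates, the same split into the diagonal terms (which give the extra $A_+f$) and off-diagonal terms, and the same key fact that reflection and reduction in distinct coordinates commute. The only cosmetic difference is that you package the computation as the explicit commutation relation $A_0A_+ - A_+A_0 = A_+$ on functions supported in $\Sigma_r$, whereas the paper inserts the eigenvalue $\lambda$ directly by recognizing the off-diagonal sum as $\sum_{\nu}(A_0f)(w_\nu^-)$.
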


\begin{proof} One has $(A_0f)(v)=\sum_{k:d_k(v)=1} f(\tilde{v}_k)$.  Since $A_+f(w)=\sum_{\nu:d_\nu(w)=1} f(w_\nu^-)$,
altogether,
\begin{multline*}(A_0A_+f)(w)=\sum_{\mu:d_\mu(w)=1}(A_+f)(\tilde{w}_\mu)
=\sum_{\mu:d_\mu(w)=1}\sum_{\nu:d_\nu(\tilde{w}_\mu)=1} f((\tilde{w}_\mu)_\nu^-)\\
=\sum_{\mu:d_\mu(w)=1}\sum_{\nu:d_\nu(w)=1}f((\tilde{w}_\mu)_\nu^-)
=\sum_{\nu:d_\nu(w)=1} \sum_{\mu:d_\mu(w)=1}f((\tilde{w}_\mu)_\nu^-)\\
=\sum_{\nu:d_\nu(w)=1}[ f((\tilde{w}_\nu)_\nu^-)+\sum_{\mu:d_\mu(w)=1,\,\mu\neq \nu }f((\tilde{w}_\mu)_\nu^-)]\\
=\sum_{\nu:d_\nu(w)=1}[ f(w_\nu^-)+(A_0f)(w_\nu^-)]=(\lambda+1)\sum_{\nu:d_\nu(w)=1} f(w_\nu^-)=(\lambda+1) (A_+f)(w)
\end{multline*}
 since $d_\nu(w_\nu^-)=0$ implies that $\{(\tilde{w}_\mu)_\nu^-: d_\mu(w)=1,\, \mu\neq \nu\}$ is the set of all level-one reflections
of $w_\nu^-$ and that $f((\tilde{w}_\nu)_\nu^-)=f(w_\nu^-)$. We also used that $A_0f=\lambda f$.
\end{proof}

In $\mathcal{C}_3^N$, $\Sigma_r$ has $2^r\binom{N}{r}$ vertices. For each choice $S_r$ of $r$ nonzero coordinates, 
a basis for the vertex functions whose nonzero coordinates are $S_r$ can be obtained by ordering the elements of $S_r$
and forming the Hadamard matrix $H_r$ of size $2^r$ (i.e., the $r$-th Kronecker power of the Haar matrix with 
rows $[1,1]/\sqrt{2}$ and $[1,-1]/\sqrt{2}$) on these indices.  For each such $S_r$, there is an $\binom{r}{s}$-dimensional
eigenspace of $A_0$ with eigenvalue $2s-r$ spanned by the columns of $H_r$ that are symmetric in $s$ of the $S_r$-coordinates
and antisymmetric in the others.
A basis for the $(2s-r)$-eigenspace
of $A_0$ on $\Sigma_r$ results from ranging over choices of $S_r$.

\begin{lemma} \label{lem:eigen_outer_power_3} Let $f$ be a $\lambda$-eigenvector of $A_0$ supported in $\Sigma_r$ and in the kernel of $A_-$. Then
$A_- A_+^{k+1} f =m(r,k,\lambda) A_+^k f$ where $m(r,k,\lambda)$ is defined inductively by $m(r,0,\lambda)=2N-3r-\lambda$
and $m(r,k,\lambda)=m(r,k-1,\lambda)+(2N-3r-4k)-\lambda$.
\end{lemma}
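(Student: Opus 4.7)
The plan is to proceed by induction on $k$, using Theorem~\ref{thm:commutator_3} (i.e., $C = (2N-3r)I - A_0$ on $\Sigma_r$) together with Lemma~\ref{lem:eigen_outer_3} (that $A_+$ raises the $A_0$-eigenvalue by one) at each level. The hypothesis $A_- f = 0$ will only be needed to launch the base case.

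For $k=0$, since $A_+ A_- f = 0$ we have $A_- A_+ f = C f$, and Theorem~\ref{thm:commutator_3} combined with $A_0 f = \lambda f$ gives $A_- A_+ f = (2N-3r)f - A_0 f = (2N-3r-\lambda)f$, matching $m(r,0,\lambda)$.

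For the inductive step, set $g = A_+^{k+1} f$. Because $A_+$ maps vectors supported on $\Sigma_s$ to vectors supported on $\Sigma_{s+1}$, the function $g$ is supported on $\Sigma_{r+k+1}$; and iterating Lemma~\ref{lem:eigen_outer_3} shows $g$ is a $(\lambda+k+1)$-eigenvector of $A_0$. Applying Theorem~\ref{thm:commutator_3} at level $r+k+1$ to $g$ yields
\begin{equation*}
A_- A_+ g - A_+ A_- g \;=\; \bigl(2N - 3(r+k+1)\bigr) g - (\lambda+k+1) g \;=\; \bigl(2N - 3r - 4k - 4 - \lambda\bigr)\, g.
\end{equation*}
On the other hand, the inductive hypothesis $A_- A_+^{k+1} f = m(r,k,\lambda) A_+^k f$ gives $A_+ A_- g = m(r,k,\lambda)\, A_+^{k+1} f$. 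Combining the last two displays yields
\begin{equation*}
A_- A_+^{k+2} f \;=\; \bigl[m(r,k,\lambda) + (2N - 3r - 4(k+1)) - \lambda\bigr]\, A_+^{k+1} f,
\end{equation*}
which is exactly $m(r,k+1,\lambda)\, A_+^{k+1}f$ by the stated recursion.

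There is no real obstacle here; the whole argument is a bookkeeping exercise once one recognizes that the commutator identity must be applied at the correct level $r+k+1$ (not $r$) and that the relevant $A_0$-eigenvalue for $A_+^{k+1}f$ is $\lambda+k+1$ rather than $\lambda$. The only mildly subtle point is making sure that the image support $\Sigma_{r+k+1}$ lies in the regime where Theorem~\ref{thm:commutator_3} applies, i.e., that $r+k+1 \leq N$; but if $r+k+1 > N$ then $A_+^{k+1} f = 0$ and both sides of the desired identity vanish trivially.
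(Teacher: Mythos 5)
Your proof is correct and follows essentially the same route as the paper's: induction in which $A_-A_+^{k+2}f$ is split as $C A_+^{k+1}f + A_+(A_-A_+^{k+1}f)$, with Theorem~\ref{thm:commutator_3} applied at the level of the support of $A_+^{k+1}f$, Lemma~\ref{lem:eigen_outer_3} supplying the shifted $A_0$-eigenvalue, and the inductive hypothesis handling the second term. The only differences are cosmetic (your induction step is indexed $k\to k+1$ rather than $k-1\to k$, and you add the harmless remark that the identity is trivial once $A_+^{k+1}f=0$).
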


If $A_+^kf=0$ for some $k$ then $A_+^{k+1}f=0$ also. The identity reflects this property.

\begin{proof} First, since $A_-f=0$ we have $Cf=A_-A_+f-A_+A_-f=A_-A_+ f=((2N-3r)-\lambda)f$ by Thm.~\ref{thm:commutator_3}.
Thus the result holds with $k=0$ and $m(r,0,\lambda)=(2N-3r-\lambda)$.  Suppose that the result is true for $k-1$.
Then, again by  Thm.~\ref{thm:commutator_3}, 
\begin{multline*} A_-A_+^{k+1} f=[A_-A_+-A_+A_-+A_+A_-]A_+^k f= CA_+^k f+A_+(A_-A_+^k f)\\
=CA_+^k f+m(r,k-1,\lambda) A_+^kf=(2N-3(r+k)) A_+^k f -A_0A_+^k f +m(r,k-1,\lambda) A_+^k f \\
=[(2N-3(r+k))-(\lambda+k)+m(r,k-1,\lambda)]A_+^k f  = m(r,k,\lambda)A_+^k f \qedhere
\end{multline*}
\end{proof}

\begin{theorem} \label{thm:adjacency_invariant_C3} Let $\mathcal{W}_{r,\lambda}$ denote the space of vertex functions supported in $\Sigma_r$ that are in the kernel of ${A}_-$
and are $\lambda$-eigenvectors of ${A}_0$. 
Let  $\mathcal{V}_{r,\lambda}=\{\sum_{k=0}^{N-r} c_k {A}_+^k f: f\in \mathcal{W}_{r,\lambda},\, c_k\in\mathbb{C}\}$.
Then for each eigenvalue $\lambda$ of ${A}_0$, the space  $\mathcal{V}_{r,\lambda}$ is invariant under the adjacency operator $A$
on $\mathcal{C}_3^N$.
\end{theorem}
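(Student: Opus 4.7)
The plan is to use the decomposition $A = A_+ + A_- + A_0$ and verify that each of the three summands maps $\mathcal{V}_{r,\lambda}$ into itself. By linearity, it suffices to check invariance on a generator $A_+^k f$ with $f \in \mathcal{W}_{r,\lambda}$ and $0 \leq k \leq N-r$.

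First I would establish the decomposition. For a vertex $v$ and coordinate index $k$, the two neighbors $v \pm e_k$ fall into exactly three cases: if $d_k(v)=0$ both move to level $1$ (contributing to $A_+$); if $d_k(v)=1$, one of $v \pm e_k$ reduces that coordinate to $0$ (contributing to $A_-$) while the other realizes the reflection $1 \leftrightarrow -1$ in $\mathbb{Z}_3$ (contributing to $A_0$ via $\tilde v_k$). Summing over $k$ yields $A = A_+ + A_- + A_0$.

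Next I would compute the image of a generator $A_+^k f$ under each summand. The outer piece gives $A_+(A_+^k f) = A_+^{k+1} f$, which lies in $\mathcal{V}_{r,\lambda}$ (and vanishes when $k = N-r$, since the support would otherwise lie past $\Sigma_N$). The neutral piece is handled by iterating Lemma~\ref{lem:eigen_outer_3}: each application of $A_+$ raises the $A_0$-eigenvalue by one, so $A_+^k f$ is a $(\lambda+k)$-eigenvector of $A_0$ and $A_0(A_+^k f) = (\lambda+k)\,A_+^k f$. The inner piece vanishes for $k=0$ by the kernel condition defining $\mathcal{W}_{r,\lambda}$, and for $k \geq 1$, Lemma~\ref{lem:eigen_outer_power_3} applied at index $k-1$ gives $A_-(A_+^k f) = m(r,k-1,\lambda)\,A_+^{k-1} f$. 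Each image is a linear combination of the generators $\{A_+^j f\}_{j=0}^{N-r}$, so $\mathcal{V}_{r,\lambda}$ is $A$-invariant.

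The main obstacle is essentially bookkeeping: cleanly justifying the three-part splitting $A = A_+ + A_- + A_0$, and correctly tracking the boundary behavior at $k=0$, where the inner contribution is killed by the kernel condition on $\mathcal{W}_{r,\lambda}$, and at $k = N-r$, where the outer contribution vanishes for degree reasons. All of the substantive algebra has already been packaged into the preceding lemmas, so no further nontrivial computation is required.
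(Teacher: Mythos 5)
Your proposal is correct and follows essentially the same route as the paper's proof: split $A=A_-+A_0+A_+$, note $A_+$-invariance is built into the definition, get $A_0$-invariance from Lemma~\ref{lem:eigen_outer_3} (each generator $A_+^k f$ is a $(\lambda+k)$-eigenvector), and get $A_-$-invariance from Lemma~\ref{lem:eigen_outer_power_3}. Your extra bookkeeping (justifying the three-part splitting coordinate-by-coordinate and the boundary cases $k=0$, $k=N-r$) just makes explicit what the paper leaves implicit.
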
 

\begin{proof} By its definition, $\mathcal{V}_{r,\lambda}$ is invariant under $A_+$.  By Lem.~\ref{lem:eigen_outer_3}, 
$A_+^k f$ is an eigenvector of $A_0$ for each $k$ if $f\in \mathcal{W}_{r,\lambda}$\ and therefore $A_0$ maps any sum of the form 
$\sum_{k=0}^{N-r} c_k {A}_+^k f: f\in \mathcal{W}_{r,\lambda}$ to another sum of the same form. Finally, by Lem.~\ref{lem:eigen_outer_power_3}, 
 $A_-$ maps any sum of this form to another such sum. Since $A=A_-+A_0+A_+$, the theorem follows.
\end{proof}

Evidently,  $\mathcal{V}_{r,\lambda}$ is isomorphic to $\mathcal{W}_{r,\lambda}\times \mathbb{C}^{N+1-r}$.
The action of $A$ on $\mathcal{V}_{r,\lambda}$ only depends on the $\mathbb{C}^{N+1-r}$ factor and can be represented by 
a size $N+1-r$ tri-diagonal \emph{level matrix} that has ones on the diagonal below the main diagonal, entries $\lambda+k$ on the main diagonal,
and $m(r,k,\lambda)$ on the diagonal above the main diagonal. 
This representation can make application of polynomials of $A$ on  $\mathcal{V}_{r,\lambda}$ feasible when $N$ is large, cf.,
\cite{hogan2018spatiospectral}. 

\section{Adjacency-invariant spaces on $\mathcal{C}_4^N$\label{sec:c4}}

In this case, $\Sigma_r=\{v:\, d(v)=r\}=\cup_{2q+p=r} \Sigma_{p,q}$ where $\Sigma_{p,q}$ consists of those vertices
that have $q$ level-two coordinates and $p$ level-one coordinates. For example $\Sigma_5=\Sigma_{5,0}\cup\Sigma_{3,1}\cup\Sigma_{1,2}$.

\begin{theorem} \label{thm:commutator_4} 
Let $f$ be supported on  $\Sigma_r$, the vertices of distance $r$ from the origin in $\mathcal{C}_4^N$. Then ${C}f=2(N-r) f$.
\end{theorem}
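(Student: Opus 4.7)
By Lem.~\ref{lem:samelevel}, $C$ is supported on pairs $(v,u)$ with identical level vectors, so it suffices to analyze the diagonal entry $C_{v,v}$ and the off-diagonal entries $C_{v,u}$ for equi-level $u \neq v$, and then assemble $(Cf)(v)$ from these contributions.

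For the diagonal, I would fix $v \in \Sigma_r$ with $p$ level-one and $q$ level-two coordinates, so $r = p + 2q$. The diagonal entry of $A_-A_+$ at $v$ counts the outer neighbors of $v$: raising a level-zero coordinate to $\pm 1$ gives $2(N-p-q)$ of them, and raising a level-one coordinate to $2$ gives $p$ more. The diagonal entry of $A_+A_-$ at $v$ counts the inner neighbors of $v$: lowering a level-one coordinate to $0$ gives $p$, and lowering a level-two coordinate to $\pm 1$ gives $2q$ (since in $\mathbb{Z}_4$ the value $2$ is adjacent to both $1$ and $-1$). Subtracting yields $C_{v,v} = [2(N-p-q) + p] - [p + 2q] = 2(N - p - 2q) = 2(N-r)$.

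For the off-diagonal part, I would use the fact that in $\mathbb{Z}_4$ each of levels $0$ and $2$ has a unique representative while level $1$ has two representatives $\pm 1$. Hence an equi-level $u \neq v$ differs from $v$ only by sign flips at a nonempty subset $K$ of $v$'s level-one coordinates. If $|K| \geq 2$, then $v$ and $u$ share no common neighbor: for any common $w$ one has $u - v = (w-v) - (w-u) = \pm e_{j_1} \mp e_{j_2}$, whose nonzero entries have absolute value $1$ on at most two coordinates, while $u - v$ carries entries $\pm 2$ on the $|K|$ coordinates of $K$. So $C_{v,u} = 0$ in this range.

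The only remaining case is $|K| = 1$, say $v_k = 1$ and $u_k = -1$. Here $v$ and $u$ have exactly two common neighbors, namely the vertices obtained from $v$ by setting the $k$th coordinate to $0$ or to $2$, all other coordinates unchanged. The first lies in $\Sigma_{r-1}$ and is a common inner neighbor; the second lies in $\Sigma_{r+1}$ and is a common outer neighbor. These contribute $1$ each to $(A_+A_-)_{v,u}$ and $(A_-A_+)_{v,u}$, which cancel in $C_{v,u}$. The key delicacy, absent in the $\mathcal{C}_3^N$ analysis, is exactly this mirror-pairing: the self-inverse value $2$ in $\mathbb{Z}_4$ matches each ``up-through-level-$2$'' two-step walk at a sign-flip pair with a ``down-through-level-$0$'' two-step walk, killing any off-diagonal contribution and leaving $C$ purely diagonal with entry $2(N-r)$ on $\Sigma_r$.
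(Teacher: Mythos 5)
Your proof is correct and takes essentially the same route as the paper: both invoke Lem.~\ref{lem:samelevel} to dismiss cross-coordinate terms and then count out-and-back walks coordinate by coordinate, your $|K|=1$ cancellation (the level-$0$ and level-$2$ common neighbors of a sign-flip pair) being exactly the paper's cancellation of the $f(\tilde{v}_k)$ terms between $A_-A_+$ and $A_+A_-$, and your diagonal count $2(N-p-q)+p-(p+2q)$ matching its level-zero/level-one/level-two bookkeeping. The only differences are organizational---you compute matrix entries of $C$ rather than coordinate-wise contributions to $(Cf)(v)$---plus a small terminological slip: in the off-diagonal step ``equi-level'' should read ``equal level vectors,'' which is what Lem.~\ref{lem:samelevel} gives and what your sign-flip argument actually uses.
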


\begin{proof} It suffices to show that the same multiplier applies to $\Sigma_{p,q}$ for each case of $p+2q=r$.  
We write $A_+$ and $A_-$ explicitly  in this case as
$(A_+f)(v)=\sum_{d_k(v)=2} f(v_k^-) +\sum_{d_k(v)=1} f(v_k^-)$ and $(A_- g)(w)=\sum_{d_k(w)=0} f(w_k^+)+\sum_{d_k(w)=1} f(w_k^+)$.
Thus 
\begin{multline*}(A_-A_+ f)(v) =\Bigl(\sum_{d_k(v)=0}+\sum_{d_k(v)=1}\Bigr) (A_+f)(v_k^+) \\
=\Bigl(\sum_{d_k(v)=0}+\sum_{d_k(v)=1}\Bigr)\Bigr( \sum_{d_\nu(v_k^+)=2} +\sum_{d_\nu(v_k^+)=1}\Bigr) f((v_k^+)_\nu^-)\\
=\Bigl(\sum_{d_k(v)=0}+\sum_{d_k(v)=1}\Bigr) f((v_k^+)_k^-)+{\rm other \,\, terms}
\end{multline*}
Similarly,
\begin{multline*}(A_+A_- f)(v) =\Bigl(\sum_{d_\nu(v)=1}+\sum_{d_\nu(v)=2}\Bigr) (A_-f)(v_\nu^-) \\
=\Bigl( \sum_{d_\nu(v)=2} +\sum_{d_\nu(v)=1}\Bigr)\Bigl(\sum_{d_k(v_\nu^-)=0}+\sum_{d_k(v_\nu^-)=1})  f((v_\nu^-)_k^+)\Bigr) \\
=\Bigl( \sum_{d_\nu(v)=2} +\sum_{d_\nu(v)=1}\Bigr)  f((v_\nu^-)_\nu^+)+{\rm other \,\, terms}
\end{multline*} 
Contributions to $(A_\pm A_\mp f)(v)$ come from two-edge paths of two types:  paths in which the level changes are in the same coordinate
and paths that involve a level increase in one coordinate and a level
decrease in a different coordinate. The latter account for the \emph{other terms} above.
As observed in Lem.~\ref{lem:samelevel}, these \emph{other terms}
cancel between $A_-A_+$ and $A_+A_-$.  Also, if $d_k(v)=1$ then the $k$th coordinate
contribution to 
$A_-A_+f(v)$ is $f(v)+f(\tilde{v}_k)$, since $(v_k^+)_k^-$ is the pair $(v, \tilde{v}_k)$.  
The $k$th coordinate contribution to  $A_+A_-f(v)$
is also $f(v)+f(\tilde{v}_k)$,
since $(v_k^-)_k^+$ is also the pair $(v,\tilde{v}_k)$, so the \emph{level-one} terms  cancel in $Cf$.
If $d_k(v)=0$ then $v_k^+$ and $\widetilde{(v_k^+)}_k$ are both assigned the value $f(v)$ under $A_+$ so 
the $k$th coordinate contribution to $(A_-A_+f)(v)$ is $2f(v)$ whereas the $k$th contribution to $(A_+A_-f)(v)$ is zero,
so the net contribution to $Cf$ from level-zero terms is $2(N-(p+q)) f$. 
On the other hand, if $d_\nu(v)=2$  then the  $\nu$th coordinate contribution to $(A_-A_+f)(v)$ is zero, whereas  
$v_\nu^-$ and $(\widetilde{v_\nu^{-}})_\nu$ are both assigned the value $f(v)$ under $A_-$, so the $v_\nu$-th contribution to 
$(A_+A_-f)(v)$ is $2f(v)$, and the net level-two contribution to $Cf $ is $-2q f$.
Altogether, the different coordinate contributions to $(Cf)(v)$ add up to $2(N-(p+q)-q) f(v)=2(N-r) f(v)$, independent of the level component, 
$\Sigma_{p,q}\subset \Sigma_r$, of $v$. This proves the theorem.
\end{proof}

\begin{lemma} Suppose that $f$ is supported in $\Sigma_{p,q}$ and $A_-f=0$. Then $(A_-A_+^{k+1} f)(v)=m(r,k) f(v)$ where
$m(r,k)$ is defined iteratively by $m(r,0)=2(N-r)$ and $m(r,k)=[m(r,k-1)+2(N-(r+k))]$, $r=p+2q$.
\end{lemma}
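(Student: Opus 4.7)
The plan is to mirror the induction used in Lem.~\ref{lem:eigen_outer_power_3}, but with the much cleaner commutator from Thm.~\ref{thm:commutator_4}. Note that the statement is the direct $\mathcal{C}_4^N$ analogue of the $\mathcal{C}_3^N$ identity $A_-A_+^{k+1}f = m(r,k,\lambda) A_+^k f$, and by comparison the intended right-hand side is $m(r,k)\, A_+^k f(v)$; I will prove it in that form. The base case $k=0$ is immediate: since $A_-f=0$, I have
\[
A_-A_+ f = Cf + A_+A_-f = Cf = 2(N-r)f = m(r,0)\, f,
\]
invoking Thm.~\ref{thm:commutator_4} and the fact that $f$ is supported on $\Sigma_{p,q}\subseteq \Sigma_r$ with $r=p+2q$.

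For the inductive step, I would assume $A_-A_+^k f = m(r,k-1) A_+^{k-1} f$ and compute
\[
A_-A_+^{k+1} f = [A_-,A_+]\bigl(A_+^k f\bigr) + A_+\bigl(A_-A_+^k f\bigr) = C(A_+^k f) + m(r,k-1)\, A_+^k f.
\]
The crucial observation is that $A_+$ moves the support outward by one level, so $A_+^k f$ is supported on $\Sigma_{r+k}$; applying Thm.~\ref{thm:commutator_4} at level $r+k$ gives $C(A_+^k f) = 2(N-r-k)\, A_+^k f$. Combining the two contributions,
\[
A_-A_+^{k+1} f = \bigl[m(r,k-1) + 2(N-r-k)\bigr]\, A_+^k f = m(r,k)\, A_+^k f,
\]
which matches the recursion defining $m(r,k)$.

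The only step requiring any care is the assertion that $A_+^k f$ is supported on $\Sigma_{r+k}$, which follows from the definition of $A_+$ and hence provides the correct level at which to invoke Thm.~\ref{thm:commutator_4}; everything else is a bookkeeping rewrite of $A_-A_+ = C + A_+A_-$. There is no genuine obstacle: unlike the $\mathcal{C}_3^N$ case, the commutator in $\mathcal{C}_4^N$ carries no $A_0$ correction, so no eigenvector hypothesis on $A_0$ is needed and the recursion for $m(r,k)$ depends only on $r$ and $k$, not on a neutral-eigenvalue parameter. Finally, I would note the consistency check that if $A_+^k f = 0$ then both sides vanish, reflecting that once the iterated outer adjacency annihilates $f$, all further applications do as well.
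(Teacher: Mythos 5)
Your proof is correct and follows essentially the same route as the paper: the base case from Thm.~\ref{thm:commutator_4} plus $A_-f=0$, and the inductive step via $A_-A_+^{k+1}f=CA_+^kf+A_+(A_-A_+^kf)$ with the commutator evaluated on $\Sigma_{r+k}$. Your reading of the statement's right-hand side as $m(r,k)\,(A_+^kf)(v)$ (rather than $m(r,k)f(v)$) matches what the paper actually proves, and your support observation $A_+^kf\subseteq\Sigma_{r+k}$ is the correct form of the paper's remark.
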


\begin{proof} The case $k=0$ follows from Thm.~\ref{thm:commutator_4} and the hypothesis that $f$ is in the kernel of $A_-$.
Suppose that the conclusion holds for $k$ replaced by $k-1$. Then
\begin{multline*} A_-A_+^{k+1} f= (A_-A_+-A_+A_-+A_+A_-)A^kf =C A_+^k f+A_+A_- A_+^{k} f\\
=2(N-(r+k)) A_+^k f +A_+ (m(r,k-1)) A_+^{k-1} f = m(r,k) A_+^k f\, .
\end{multline*}
Here we used again Thm.~\ref{thm:commutator_4}. It applies because $A_+^k f$ is supported in $\Sigma_r=\cup_{p+2q=r}\Sigma_{p,q}$,
and then applied the induction hypothesis. This proves the lemma.
\end{proof}

\begin{theorem} \label{thm:adjacency_invariant_C4} Let $\mathcal{W}_{p,q,0}$ be the space of vertex functions supported in $\Sigma_{p,q}$ and in the kernel of $A_-$.
Set $\mathcal{V}_{p,q}=\{\sum_{k=0}^{N-r} c_k A_+^k f,\, f\in \mathcal{W}_{p,q,0}\}$. Then $\mathcal{V}_{p,q}$ is invariant under $A$.
\end{theorem}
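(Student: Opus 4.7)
The plan mirrors the proof of Thm.~\ref{thm:adjacency_invariant_C3}, with the welcome simplification that the neutral adjacency $A_0$ vanishes identically on $\mathcal{C}_4^N$, so no neutral-adjacency bookkeeping is needed.

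First I would verify $A_0\equiv 0$ on $\mathcal{C}_4^N$. If $v\sim w$ then $v-w=\pm e_k$ for some $k$, so the two vertices agree outside coordinate $k$, while the $k$-coordinates $\ell,\ell'\in\{-1,0,1,2\}$ satisfy $\ell-\ell'\equiv\pm 1\pmod 4$. Checking the four values of $\ell$ one by one, in every case $|\ell|\neq|\ell'|$, so $d_k(v)\neq d_k(w)$ and hence $d(v)\neq d(w)$. Thus adjacent vertices are never equi-distant from the origin, the neutral component is empty, and $A=A_++A_-$ on $\mathcal{C}_4^N$.

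Next, fix a generic element $g=\sum_{k\geq 0} c_k A_+^k f\in\mathcal{V}_{p,q}$ with $f\in\mathcal{W}_{p,q,0}$ and $r=p+2q$, truncating at the first index where $A_+^k f$ vanishes. I would show separately that $A_+ g$ and $A_- g$ lie in $\mathcal{V}_{p,q}$. The $A_+$ action simply shifts the index $k\mapsto k+1$, so $A_+g=\sum_{k\geq 0}c_k A_+^{k+1}f$ is again a linear combination of $\{A_+^j f\}_{j\geq 0}$ and thus in $\mathcal{V}_{p,q}$. For $A_-$, apply the hypothesis $A_-f=0$ to the $k=0$ term and the preceding lemma (in the form $A_-A_+^{k+1}f=m(r,k)\,A_+^k f$, the form actually proved in its derivation) to the remaining terms:
\begin{equation*}
A_-g=c_0\,A_-f+\sum_{k\geq 0}c_{k+1}\,A_-A_+^{k+1}f=\sum_{k\geq 0}c_{k+1}\,m(r,k)\,A_+^k f\in\mathcal{V}_{p,q}.
\end{equation*}
Summing yields $Ag=A_+g+A_-g\in\mathcal{V}_{p,q}$, establishing invariance.

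The sole conceptual ingredient is recognizing the vanishing of $A_0$; once noted, the rest is routine bookkeeping that parallels the $\mathcal{C}_3^N$ argument. There is no genuine obstacle, and the contrast with $\mathcal{C}_3^N$ is precisely that the absence of equi-level neighbors obviates any need to refine $\mathcal{W}_{p,q,0}$ by eigenspaces of a neutral adjacency operator.
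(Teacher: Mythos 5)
Your proof is correct and takes essentially the same route as the paper's: observe that $\mathcal{C}_4^N$ has no neutral adjacencies so $A=A_+ + A_-$, note that invariance under $A_+$ is immediate from the form of $\mathcal{V}_{p,q}$, and handle $A_-$ term by term using $A_-f=0$ together with the identity $A_-A_+^{k+1}f=m(r,k)\,A_+^k f$. Your explicit check that adjacent vertices are never equidistant from the origin, and your reading of the lemma in the form actually established in its proof, merely spell out what the paper states as an observation.
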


\begin{proof} Observe that there are no neutral adjacencies, that is, vertex pairs $v,w\in \Sigma_r$ such that $v\sim w$. Thus $A=A_++A_-$.
Evidently $\mathcal{V}_{p,q}$ is invariant under $A_+$.  If $g=\sum_{k=0}^{N-r} c_k A_+^k f$, $f\in \mathcal{W}_{p,q,0}$ then 
\[A_- g=\sum_{k=1}^{N-r}
c_k m(r,k-1) A_+^{k-1} f =\sum_{k=0}^{N-r-1} c_{k+1} m(r,k) A_+^k f\] 
which is also in $\mathcal{V}_{p,q}$ (with no term $A_+^{N-r} f$).
\end{proof}

Similar to the case of $\mathcal{C}_3^N$,  $\mathcal{V}_{p,q}\simeq \mathcal{W}_{p,q,0}\times \mathbb{C}^{N+1-r}$ ($r=p+2q$),
and the action of $A$ on $\mathcal{V}_{p,q}$ can be realized as a tridiagonal level matrix on $ \mathbb{C}^{N+1-r}$.
In this case the matrix has ones on the diagonal below the main diagonal, $m(r,k)$ on the diagonal above the main diagonal, and zeros on the 
main diagonal.

\section{Adjacency-invariant spaces on $\mathcal{C}_5^N$\label{sec:c5}}

As in the $\mathcal{C}_3^N$ case, $\mathcal{C}_5^N$
has \emph{neutral adjacencies}, this time because ``$-2$'' and ``$2$'' are adjacent in $\mathcal{C}_5$, see Fig.~\ref{fig:short_cycles}.
Additionally, a path from ``$1$'' to ``$-1$'' through the origin in $\mathcal{C}_5$ can be viewed as a reflection, which arises under composite
partial adjacencies $A_+A_-$. 
These properties translate into more technical description of adjacency invariant  spaces on $\mathcal{C}_5^N$, to which we will refer
as $\mathcal{V}_{p,q}$-type spaces, 
compared to the invariant spaces $\mathcal{V}_{r,\lambda}$ on $\mathcal{C}_3^N$ and $\mathcal{V}_{p,q}$ on $\mathcal{C}_4^N$. To facilitate such description,
we in turn describe \emph{subadjacencies} $A_{(p,q)\to (p-1,q+1)}$  and $A_{(p,q)\to (p+1,q)}$ of $A_+$ 
that map values on vertices  $v\in \Sigma_{p,q}$ such that $d(v)=p+2q$ to corresponding values 
on adjacent vertices $w$ in $\Sigma_{p-1,q+1}$ or $\Sigma_{p+1,q}$, respectively, such that $d(w)=p+2q+1$. Similarly, we denote by 
  ${A}_{(p,q)\to (p-1,q)}$ and  ${A}_{(p,q)\to (p+1,q-1)}$  subadjacencies of ${A}_-$ that map 
 $\Sigma_{p,q}$ to $\Sigma_{p-1,q}$ and $\Sigma_{p+1,q-1}$, respectively.
Finally, we denote by  ${A}_{(p,q)\to (p,q)}$  the \emph{neutral subadjacency} operator on $\Sigma_{p,q}$ that replaces
a vertex function value $f(v)$ at $v\in \Sigma_{p,q}$ by the sum $\sum_{k:d_k(v)=2} f(\tilde{v}_k) $ of the values at its neutral neighbors in $\Sigma_{p,q}$. 
For $q=1$, $A_{(p,1)\to (p,1)}^2=I$. More generally, ${A}_{(p,q)\to (p,q)}$ is in the kernel of a nontrivial polynomial of degree $q+1$.
We refer to the subadjacencies from  $\Sigma_{p,q}$ to $\Sigma_{p\pm 1,q\mp1}$ 
as \emph{internal adjacencies},
referring to raising and lowering the level in a non-null coordinate, and to ${A}_{(p,q)\to (p\pm 1,q)}$
as \emph{external adjacencies} that involve adjacencies between vertices with different numbers of null coordinates.
We will use the symbols $A_-,A_+,A_0$ as abbreviations for particular inner, outer, and neutral subadjacencies when appropriate.

In defining suitable analogues on $\mathcal{C}_5^N$ of the adjacency-invariant 
subspaces of type $\mathcal{V}_{p,q}$ defined in Thms.~\ref{thm:adjacency_invariant_C3} and  \ref{thm:adjacency_invariant_C4},
between successive powers of $A_+$ that occur in their description, we need to allow application of a neutral adjacency after every subadjacency of the form 
$A_{(p,q)\to (p-1,q+1)}$ that increases
the number of level-two coordinates. So elements of $\mathcal{V}_{p,q}$-type spaces will be sums of
compositions of outer and neutral subadjacencies  starting from a fixed level.
The different subadjacencies in turn give rise to  different multipliers for each different level. 
As before, establishing adjacency invariance of $\mathcal{V}_{p,q}$-type spaces boils down to understanding compositions of the different
subadjacencies.

The commutator $C=[{A}_-,{A}_+]$ acting on vectors supported in $\Sigma_{p,q}$ breaks into components as follows.
\begin{multline*}\left. C\right|_{\Sigma_{p,q}}={A}_{(p+1,q)\to (p,q)} {A}_{(p,q)\to (p+1,q)} +{A}_{(p-1,q+1)\to (p,q)} {A}_{(p,q)\to (p-1,q+1)} \\
+{A}_{(p+1,q)\to (p+2,q-1)} {A}_{(p,q)\to (p+1,q)}  
+{A}_{(p-1,q+1)\to (p-2,q+1)} {A}_{(p,q)\to (p-1,q+1)}\\
-{A}_{(p-1,q)\to (p,q)} {A}_{(p,q)\to (p-1,q)} -{A}_{(p+1,q-1)\to (p,q)} {A}_{(p,q)\to (p+1,q-1)} \\
-{A}_{(p-1,q)\to (p-2,q+1)} {A}_{(p,q)\to (p-1,q)} - 
{A}_{(p+1,q-1)\to (p+2,q-1)} {A}_{(p,q)\to (p+1,q-1)} \\
=[  {A}_{(p+1,q)\to (p,q)} {A}_{(p,q)\to (p+1,q)} -{A}_{(p-1,q)\to (p,q)} {A}_{(p,q)\to (p-1,q)} ]\\
+[{A}_{(p-1,q+1)\to (p,q)} {A}_{(p,q)\to (p-1,q+1)} -{A}_{(p+1,q-1)\to (p,q)} {A}_{(p,q)\to (p+1,q-1)}  ]\\
+[  {A}_{(p-1,q+1)\to (p-2,q+1)} {A}_{(p,q)\to (p-1,q+1)} -{A}_{(p-1,q)\to (p-2,q+1)} {A}_{(p,q)\to (p-1,q)}]\\
+[{A}_{(p+1,q)\to (p+2,q-1)} {A}_{(p,q)\to (p+1,q)}  -{A}_{(p+1,q-1)\to (p+2,q-1)} {A}_{(p,q)\to (p+1,q-1)}],
\end{multline*}
expressing the commutator on $\Sigma_{p,q}$ as a sum of external and internal commutators with targets in $\Sigma_{p,q}$, $\Sigma_{p-2,q+1}$ and 
$\Sigma_{p+2,q-1}$ respectively.  The terms that map data on $\Sigma_{p,q}$ to $\Sigma_{p-2,q+1}$ and to $\Sigma_{p+2,q-1}$
vanish because of Lem.~\ref{lem:samelevel}.   That lemma also implies that all vertex-pair terms affected in 
\[ {A}_{(p+1,q)\to (p,q)} {A}_{(p,q)\to (p+1,q)}-{A}_{(p-1,q)\to (p,q)} {A}_{(p,q)\to (p-1,q)}  
\]
vanish except those in which the extension and reduction occurs in the same coordinate. There are $N-(p+q)$ null coordinates in 
any $\Sigma_{p,q}$-vertex, hence $2[N-(p+q)]$ \emph{out-and-back paths} to any vertex in $\Sigma_{p,q}$ via this term, while there are $p$ \emph{in-and-back}
paths plus an \emph{in-and-out} path to each level-one vertex reflection. Consequently the action of this term on a vertex function $f$ supported
in $\Sigma_{p,q}$ is multiplication by $2(N-q)-3p$, plus ${R}_1$, the level-one reflection of $f$, defined by $(R_1f)(v)=\sum_{d_k(v)=1} (\rho_k f)(v)$. 
Finally, Lem.~\ref{lem:samelevel} also implies that
any vertex-pair term in
\[{A}_{(p-1,q+1)\to (p,q)} {A}_{(p,q)\to (p-1,q+1)}- {A}_{(p+1,q-1)\to (p,q)} {A}_{(p,q)\to (p+1,q-1)} 
\]
also vanishes except for those in which the extension and reduction ends in the same coordinate. However, in this case all extensions are level one to level two and all reductions
are level two to level one. Thus the in-coordinate elements of both terms ${A}_{(p+1,q-1)\to (p,q)} {A}_{(p,q)\to (p+1,q-1)}$ and ${A}_{(p-1,q+1)\to (p,q)} {A}_{(p,q)\to (p-1,q+1)}$
map a vertex value to itself, so the two terms cancel and this term of the commutator vanishes. Altogether we have proved the following.

\begin{theorem}  \label{thm:commutator_C5} As an operator on the space of vectors supported on $\Sigma_{p,q}$, 
the vertices that have $p$-coordinates at level one and 
$q$ coordinates at level two in $\mathcal{C}_5^N$, $ C=[A_-,\, A_+] =(2(N-q)-3p) I+R_1$.
\end{theorem}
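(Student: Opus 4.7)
My plan is to evaluate $(Cf)(v)$ for a fixed $v\in\Sigma_{p,q}$ by enumerating the two-edge paths
$v\to v'\to u$ that contribute to $(A_-A_+f)(v)$ and $(A_+A_-f)(v)$, and to use
Lem.~\ref{lem:samelevel} as the main pruning tool to discard every contribution whose endpoint
$u$ fails to share $v$'s level vector. The eight-term decomposition of
$\left.C\right|_{\Sigma_{p,q}}$ into subadjacency products sorts naturally into four bracketed
differences, grouped by whether the target level component is $\Sigma_{p,q}$ (two such brackets,
one \emph{external} toggling a null$\leftrightarrow$level-one coordinate, one \emph{internal}
toggling a level-one$\leftrightarrow$level-two coordinate), $\Sigma_{p-2,q+1}$, or $\Sigma_{p+2,q-1}$.

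The first reduction is immediate: no vertex in $\Sigma_{p\pm 2,q\mp 1}$ is equi-level with $v$, so
Lem.~\ref{lem:samelevel} forces those two off-target brackets to vanish. Inside each of the two
surviving brackets the same lemma further cancels every path whose up-step and down-step occur in
different coordinates, reducing the calculation to a count of \emph{same-coordinate} two-edge paths.

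That count I would do coordinate by coordinate at $v$. For the external bracket, each null coordinate
of $v$ contributes two out-and-back paths (one for each sign in the up-step), while each level-one
coordinate contributes, through the intermediate $v_k^-$, both the return path back to $v$ and a
branching path to $\tilde v_k$. The latter exists precisely because $0$ in $\mathcal{C}_5$ has two
level-one neighbours, and this branching is what manufactures the $R_1$ term after the commutator
subtraction. For the internal bracket, each level-one coordinate contributes one up-and-back path
through the unique neighbour $v_k^+=\pm 2$, and each level-two coordinate contributes one
down-and-back path through $v_k^-=\pm 1$; the two terms carry opposite signs in the commutator and
I expect them to cancel on the $f(v)$ diagonal, leaving no net internal contribution. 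Combining with
the external tally should then yield the stated formula.

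The step I would watch most carefully is the asymmetry between the two boundaries of the level
ladder in $\mathcal{C}_5$: the null$\leftrightarrow$level-one boundary branches (from $0$ one can
step up to either sign), whereas the level-one$\leftrightarrow$level-two boundary is a bijection
($\pm 1\leftrightarrow \pm 2$). This asymmetry is exactly what simultaneously produces $R_1$ on the
external side and forces the cancellation on the internal side, so any sign or multiplicity error in
the path count would either inject a spurious level-two reflection term or perturb the diagonal
coefficient. Before declaring the proof finished I would sanity-check the formula on a small case
such as $N=1$ or $N=2$ by computing the matrix of $A_-A_+-A_+A_-$ directly and matching its action
against $(2(N-q)-3p)I+R_1$.
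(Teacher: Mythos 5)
Your route is the same as the paper's: the eight-term subadjacency decomposition of $\left.C\right|_{\Sigma_{p,q}}$, the two off-level brackets (targets $\Sigma_{p\pm2,q\mp1}$) killed by Lem.~\ref{lem:samelevel}, cross-coordinate cancellation inside the surviving brackets by the same lemma, and then a same-coordinate path count per coordinate class. The genuine gap is exactly the step you flagged and then waved through: the internal bracket does not cancel on the diagonal. By your own tally, $A_{(p-1,q+1)\to(p,q)}A_{(p,q)\to(p-1,q+1)}$ contributes one same-coordinate up-and-back path for each level-one coordinate ($p$ of them, entering $C$ with sign $+$), while $A_{(p+1,q-1)\to(p,q)}A_{(p,q)\to(p+1,q-1)}$ contributes one down-and-back path for each level-two coordinate ($q$ of them, entering with sign $-$). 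The observation that each such path ``maps a vertex value to itself'' fixes where the contribution lands, not its multiplicity, so the bracket acts as $(p-q)I$, which vanishes only when $p=q$. You cannot outsource this point to the paper either: its proof asserts the same cancellation with essentially that one-line justification, so the multiplicity count is precisely what your write-up must supply.

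The sanity check you promised settles it and should actually be carried out. In $\mathcal{C}_5$ with $N=1$ and $f$ supported on $\Sigma_{1,0}=\{\pm1\}$, the only internal paths are $\pm1\to\pm2\to\pm1$, so the internal bracket is the identity (here $p-q=1$), while the external paths through the origin give $(A_+A_-f)(\pm1)=f(1)+f(-1)$; with the orientation used in the paper's other proofs (each of $A_\pm$ gathering from the appropriate neighbors so that $A_+$ pushes outward) one finds $(Cf)(\pm1)=-f(\mp1)$, i.e.\ $C=-R_1$ with zero diagonal, not $(2(N-q)-3p)I+R_1=-I+R_1$; the transposed convention flips the overall sign but still does not produce the stated diagonal. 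This also exposes the second loose end in your sketch: the reflection contributions arise inside the subtracted down-then-up external term, so with the convention that makes the null-coordinate out-and-back count enter as $+2(N-(p+q))$, the reflection enters with a minus sign. Your external count does reproduce the diagonal $2(N-q)-3p$ for that bracket, but before declaring the proof finished you must track the residual $(p-q)I$ from the internal bracket and fix the sign of $R_1$ explicitly, rather than expect them to take care of themselves.
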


Adjacency invariant space analogues of those presented for  $\mathcal{C}_3^N$ and $\mathcal{C}_4^N$ in Thms.~\ref{thm:adjacency_invariant_C3} and  \ref{thm:adjacency_invariant_C4}
are necessarily more complicated in the case of $\mathcal{C}_5^N$. As in $\mathcal{C}_3^N$, there is a role played by 
eigenvectors of neutral/reflection maps.

\begin{lemma} \label{lem:extension-reflection} $({\rho}_k {A}_+ f)(w)= ( {A}_+ {\rho}_k f)(w)$ and $({\rho}_k {A}_- f)(w)= ( {A}_- {\rho}_k f)(w)$
\end{lemma}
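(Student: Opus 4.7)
The plan is to verify the identity by unwinding the definitions and then reindexing the sum using the bijection on vertices induced by the $k$-reflection. The key observation is that the map $v\mapsto\tilde v_k$ is an involution on $\mathbb Z_m^N$ with three properties: it preserves the coordinate levels $d_j$ for every $j$ (and in particular the total path distance $d$), it preserves adjacency (since adjacent vertices differ by a generator $\pm e_j$, and reflecting in the $k$th coordinate either fixes this difference when $j\neq k$ or flips its sign when $j=k$, yielding another generator), and it is its own inverse.

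The first step is to compute, from the definitions,
\begin{equation*}
(\rho_k A_+ f)(w)=(A_+ f)(\tilde w_k)=\sum_{u\sim\tilde w_k,\ d(\tilde w_k)<d(u)} f(u)
\end{equation*}
and
\begin{equation*}
(A_+\rho_k f)(w)=\sum_{u\sim w,\ d(w)<d(u)}(\rho_k f)(u)=\sum_{u\sim w,\ d(w)<d(u)} f(\tilde u_k).
\end{equation*}
The second step is the change of variable $u'=\tilde u_k$ in the second sum. By the three properties above, $u\sim w$ iff $u'\sim\tilde w_k$, and $d(w)<d(u)$ iff $d(\tilde w_k)<d(u')$. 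Hence $u\mapsto u'$ is a bijection between the index sets of the two sums, and the two displayed expressions coincide.

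The same argument, with the condition $d(v)>d(w)$ replacing $d(v)<d(w)$, gives the identity for $A_-$; alternatively one can deduce it by taking adjoints in the first identity, since $\rho_k$ is self-adjoint (it is an involution that permutes the vertex basis). The only potential subtlety occurs when a coordinate has level $0$ or $\lfloor m/2\rfloor$, where $v_k^{\pm}$ may denote a pair rather than a single vertex, but in the sums above only the raw adjacency relation and path distance appear, so no case analysis on the levels is needed; the bijection argument handles all cases uniformly.
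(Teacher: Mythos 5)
Your proof is correct, and it takes a genuinely different route from the paper. The paper decomposes $A_+$ into the two subadjacencies $A_{(p,q)\to (p+1,q)}$ and $A_{(p,q)\to (p-1,q+1)}$ and verifies the commutation for each by the coordinate identity $(\tilde{w}_k)_\nu^{-}=(w_\nu^{-})_k^{\sim}$, with a case analysis on $k=\nu$ versus $k\neq\nu$ and on the level $d_k(w)$. You instead exploit the global symmetry: $v\mapsto\tilde{v}_k$ is an involutive graph automorphism of $\mathcal{C}_m^N$ that fixes every level $d_j$, hence fixes $d$, so reindexing the sum defining $A_\pm$ by this bijection gives the identity at once, with no case analysis and uniformly in $m$; your adjoint remark for $A_-$ (using that $\rho_k$ is a self-adjoint involution and $A_-=A_+^{*}$) is also valid. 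What the paper's finer-grained argument buys is the commutation with each \emph{subadjacency} separately, which is the form implicitly used later in the $\mathcal{C}_5^N$ analysis (the symbols $A_\pm$ there often abbreviate particular subadjacencies such as $A_{(p,q)\to (p-1,q+1)}$). Your argument recovers this with one extra sentence: since $\rho_k$ preserves every coordinate level, it maps each level set $\Sigma_{p,q}$ onto itself, so the same change of variable applies when the sum is further restricted to neighbors lying in a prescribed $\Sigma_{p',q'}$; it would be worth stating that explicitly so the lemma can be quoted for subadjacencies as well as for the full $A_\pm$.
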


\begin{proof} We prove that $({\rho}_k {A}_+ f)(w)= ( {A}_+ {\rho}_k f)(w)$  and leave $({\rho}_k {A}_- f)(w)= ( {A}_- {\rho}_k f)(w)$
as an exercise.
Observe that on the right, the reflection is in vertices in $\Sigma_{p,q}$ and on the left the reflection is in vertices
in $\Sigma_{p+1,q}$ or $\Sigma_{p-1,q+1}$. Since ${A}_+$ can be expressed as ${A}_{(p,q)\to (p+1,q)}+{A}_{(p,q)\to (p-1,q+1)}$,
it suffices to prove the commutation separately for these.

First, let $w\in \Sigma_{p-1,q+1}$ with $d_k(w)=2$. Observe that 
\begin{multline*}({A}_{(p,q)\to (p-1,q+1)} {\rho}_k f)(w)
=\sum_{\nu: d_\nu(w)=2} ({\rho}_k f)(w_\nu^{-})=\sum_{\nu: d_\nu(w)=2} f((w_\nu^{-})_k^{\sim}) \\
=\sum_{\nu: d_\nu(\tilde{w}_k)=2} f((\tilde{w}_k)_\nu^{-}) = ({A}_{(p,q)\to (p-1,q+1)}  f)(\tilde{w}_k)
=({\rho}_k {A}_{(p,q)\to (p-1,q+1)}  f)(w)\, .
\end{multline*}
Here we used the fact that $(\tilde{w}_k)_\nu^{-}=(w_\nu^{-})_k^{\sim}$. When $k\neq \nu$ the $k$th coordinate of both sides
is level two. When $k=\nu$ the $k$th coordinate of both sides is level one and the identity reduces to the fact that 
reduction commutes with reflection in each coordinate.
The identity in Lem.~\ref{lem:extension-reflection} clearly holds when $d_k(w)\in\{0,1\}$ since in these cases the reduction coordinate 
$\nu$ ($d_\nu(w)=2$) in the sum defining outer adjacency is never equal to the
reflection coordinate $k$.

Next let $w\in \Sigma_{p+1,q}$ such that $d_k(w)=1$. Observe now that 
\begin{multline*}({A}_{(p,q)\to (p+1,q)} {\rho}_k f)(w)
=\sum_{\nu: d_\nu(w)=1} ({\rho}_k f)(w_\nu^{-})=\sum_{\nu: d_\nu(w)=1} f((w_\nu^{-})_k^{\sim}) \\
=\sum_{\nu: d_\nu(\tilde{w}_k)=1} f((\tilde{w}_k)_\nu^{-})=
({A}_{(p,q)\to (p+1,q)}  f)(\tilde{w}_k)
=({\rho}_k {A}_{(p,q)\to (p+1,q)}  f)(w)\, .
\end{multline*}
Here, again, $(\tilde{w}_k)_\nu^{-}=(w_\nu^{-})_k^{\sim}$: when $k\neq \nu$ the $k$th coordinate of both sides
is level one. When $k=\nu$ the $k$th coordinate of both sides is level zero so the reflection is trivial.
The identity in Lem.~\ref{lem:extension-reflection} clearly holds when $d_k(w)\in\{0,2\}$
 since in these cases the reduction coordinate $\nu$ is never equal to the
reflection coordinate $k$.
\end{proof}

\begin{lemma}\label{lem:eigen_reflection_5} If $f$  supported in $\Sigma_{p,q}$ is a $\lambda$-eigenvector
of $R_1$ then (i) $A_{(p,q)\to (p+1,q)}f$ is a $(\lambda+1)$-eigenvector of $R_1$ on $\Sigma_{p+1,q}$ and  
(ii) $A_{(p,q)\to (p-1,q+1)}f$ is a $\lambda$-eigenvector of $R_1$ on $\Sigma_{p-1,q+1}$.
\end{lemma}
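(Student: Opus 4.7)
The plan is to follow the template of Lemma~\ref{lem:eigen_outer_3} in the $\mathcal{C}_3^N$ setting: use Lemma~\ref{lem:extension-reflection} to commute the reflection $\rho_k$ past the outer subadjacency, and then reorganize the resulting double sum so that an inner sum becomes recognizable as $R_1$ applied at a source vertex in $\Sigma_{p,q}$.

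The starting point is $(R_1 A_+ f)(w) = \sum_{k:d_k(w)=1} (A_+ f)(\tilde w_k) = \sum_{k:d_k(w)=1} (A_+ \rho_k f)(w)$, by Lemma~\ref{lem:extension-reflection}. For part (i), $(A_{(p,q)\to(p+1,q)} g)(w) = \sum_{\ell: d_\ell(w)=1} g(w_\ell^-)$ for $w\in\Sigma_{p+1,q}$, and the source $w_\ell^-\in\Sigma_{p,q}$ has level-one coordinate set $\{k:d_k(w)=1,\,k\ne\ell\}$. Splitting the resulting double sum over $k,\ell\in\{k:d_k(w)=1\}$ into the diagonal ($k=\ell$) and off-diagonal ($k\ne\ell$) parts, the diagonal contributes $\sum_\ell f(w_\ell^-)=(A_+f)(w)$, since $d_\ell(w_\ell^-)=0$ makes $\rho_\ell$ act trivially at $w_\ell^-$. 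For each fixed $\ell$, the off-diagonal inner sum over $k\ne\ell$ equals $(R_1 f)(w_\ell^-)=\lambda f(w_\ell^-)$ by hypothesis, so summing in $\ell$ yields $\lambda (A_+f)(w)$. The total is $(\lambda+1)(A_+f)(w)$, establishing part (i).

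For part (ii), $(A_{(p,q)\to(p-1,q+1)} g)(w) = \sum_{\nu:d_\nu(w)=2} g(w_\nu^-)$ for $w\in\Sigma_{p-1,q+1}$, and now the source $w_\nu^-\in\Sigma_{p,q}$ has level-one coordinate set $\{k:d_k(w)=1\}\cup\{\nu\}$, containing the newly raised coordinate $\nu$. The analogous commutation and swap of summation order yields, for each fixed $\nu$, the identity $\sum_{k:d_k(w)=1}(\rho_k f)(w_\nu^-) = (R_1 f)(w_\nu^-) - (\rho_\nu f)(w_\nu^-) = \lambda f(w_\nu^-) - (\rho_\nu f)(w_\nu^-)$. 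Summing over $\nu$ gives $\lambda(A_+f)(w)$ minus a correction $E(w)=\sum_{\nu:d_\nu(w)=2}(\rho_\nu f)(w_\nu^-)$. The main obstacle is to verify that $E\equiv 0$; this is the technical heart of the $\mathcal{C}_5^N$ analysis, and I expect it to follow from a careful pairing of the two signed outcomes of the level-one-to-level-two raising, exploiting the $\pm 2$ neutral adjacency structure specific to $\mathcal{C}_5$ together with the level-one symmetry forced by $R_1 f=\lambda f$.
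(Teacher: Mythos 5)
Part (i) of your argument is correct and is essentially the paper's own proof: commute $\rho_k$ past $A_{(p,q)\to(p+1,q)}$ via Lemma~\ref{lem:extension-reflection}, then split the double sum into the diagonal part, which contributes $(A_+f)(w)$ because the reflection acts trivially at the nulled coordinate, and the off-diagonal part, which reassembles for each fixed $\ell$ into $(R_1f)(w_\ell^-)=\lambda f(w_\ell^-)$.

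For part (ii) your bookkeeping is right, and in fact more careful than the paper's: the level-one coordinates of $w_\nu^-$ are $\{k:d_k(w)=1\}\cup\{\nu\}$, so the inner sum $\sum_{k:d_k(w)=1}f(\widetilde{(w_\nu^-)}_k)$ equals $(R_1f)(w_\nu^-)-(\rho_\nu f)(w_\nu^-)$, whereas the paper's proof identifies it with $(R_1f)(w_\nu^-)$ outright and thereby silently discards exactly your correction $E(w)=\sum_{\nu:d_\nu(w)=2}(\rho_\nu f)(w_\nu^-)$, i.e.\ the twisted outer adjacency $\widetilde{A_+}f=[A_0,A_+]f$ introduced later in the paper. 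The genuine gap in your proposal is the last step: $E\equiv 0$ is false in general, and no pairing argument will make it true, so the plan cannot be completed as stated. Concretely, in $\mathcal{C}_5^2$ take $f$ on $\Sigma_{2,0}$ given by $f(\epsilon_1,\epsilon_2)=\epsilon_1\epsilon_2$, $\epsilon_i=\pm1$; then $R_1f=-2f$, $A_-f=0$ and (vacuously) $A_0f=0$, so $f$ even satisfies all the hypotheses of $\mathcal{W}_{2,0,-2,0}$, yet $\rho_\nu f=-f$ gives $E=-A_+f\neq0$, and directly $(R_1A_+f)(2,1)=(A_+f)(2,-1)=-1$ while $\lambda\,(A_+f)(2,1)=-2$, so here $A_+f$ is a $(\lambda+1)$-eigenvector rather than a $\lambda$-eigenvector. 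What your computation actually establishes is the identity $R_1A_{(p,q)\to(p-1,q+1)}f=\lambda\,A_{(p,q)\to(p-1,q+1)}f-\widetilde{A_+}f$; statement (ii) as given holds only under extra hypotheses on $f$ (for instance definite parity under each individual level-one reflection, with the eigenvalue shifting accordingly). So the unresolved step in your write-up is a real obstruction, but it points to a flaw in the lemma and in the paper's own proof (which drops the $\nu$-term) rather than to something you could repair by a cleverer cancellation.
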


\begin{proof} First we consider the case of $A_{(p,q)\to (p+1,q)}f$.
Suppose that $w\in\Sigma_{p+1,q}$, and $d_k(w)=1$ so $w_k^-=\tilde{w}_k^-$. 
Then 
\[{\rho}_k (A_+ f)(w)
= {\rho}_k f(w_k^-)+\sum_{\nu\neq k, d_\nu(w)=1} {\rho}_k f(w_\nu^-)  
=  f(w_k^-)+\sum_{\nu\neq k, d_\nu(\tilde{w}_k)=1}  f(\widetilde{(w_\nu^-)}_k) \, .
\]
 Summing over all $k$ such that $d_k(w)=1$ then gives 
\begin{multline*}\sum_{k:d_k(w)=1} {\rho}_k (A_+ f)(w) 
= \sum_{k:d_k(w)=1} f(w_k^-)+\sum_{k:d_k(w)=1} \sum_{\nu\neq k, d_\nu(\tilde{w}_k)=1}  f(\widetilde{(w_\nu^-)}_k)  
 \\
= (A_+ f)(w)\, +\sum_{\nu:\, d_\nu(w)=1} \sum_{k\neq \nu :d_k(w)=1}   f(\widetilde{(w_\nu^-)}_k)
 \\
= (A_+ f)(w)\, +\sum_{\nu:\, d_\nu(w)=1} R_1 f(w_\nu^-)\, .
\end{multline*}
If $f$ is a  $\lambda$-eigenvector of  $R_1$ on $\Sigma_{p,q}$ then the last term becomes
\[ (A_+ f)(w) + \sum_{\nu:\, d_\nu(w)=1} \lambda f(w_\nu^-)
= \lambda (A_+ f)(w) +(A_+f)(w)=(\lambda+1)A_+f(w)\, .
\]
This gives the desired result  for $A_{(p,q)\to (p+1,q)}$.

For the case $A_{(p,q)\to (p-1,q+1)}$, let $w\in \Sigma_{p-1,q+1}$ such that $d_k(w)=1$. 
 Then $k$ cannot be one of the reduction coordinates defining $(A_+f)(w)$, so
\begin{multline*}{\rho}_k (A_+ f)(w)={\rho}_k\Bigl[\sum_{d_\nu(w)=2} f(w_\nu^-)\Bigr] = \sum_{d_\nu(w)=2}{\rho}_k  f(w_\nu^-)  
=\sum_{ d_\nu(\tilde{w}_k)=2}  f(\widetilde{(w_\nu^-)}_k) \, . \\
\end{multline*}
 Summing over all $k$ such that $d_k(w)=1$, 
\begin{multline*}\sum_{k:d_k(w)=1} {\rho}_k (A_+ f)(w) = \sum_{k:d_k(w)=1} \sum_{ d_\nu(\tilde{w}_k)=2}  f(\widetilde{(w_\nu^-)}_k)   
= \sum_{\nu:\, d_\nu(w)=2} \sum_{k :d_k(w)=1}   f(\widetilde{(w_\nu^-)}_k)  
\, \\
= \sum_{\nu:\, d_\nu(w)=2} R_1 f(w_\nu^-) = {A}_+ (R_1f)(w) =\lambda  ({A}_+ f)(w)
\, 
\end{multline*}
if $f$ is a $\lambda$-eigenvector of $R_1$. Thus $A_{(p,q)\to (p-1,q+1)}$ preserves eigenvalues of $R_1$.
\end{proof}
As a further observation, each coordinate reflection $\rho_k$ commutes with $R_1$, the sum of the level-one coordinate reflections.
Consequently, coordinate reflections preserve eigenvalues of $R_1$.
Thms.~\ref{thm:commutator_3} and  \ref{thm:commutator_4} played crucial roles in the inductive steps of the proofs of the 
Thms.~\ref{thm:adjacency_invariant_C3} and  \ref{thm:adjacency_invariant_C4} describing adjacency invariant spaces on
$\mathcal{C}_3^N$ and $\mathcal{C}_4^N$, as they allowed to interchange inner and outer adjacencies. In $\mathcal{C}_5^N$,
the corresponding spaces of type $\mathcal{V}_{p,q}$ also involve neutral adjacency maps at different levels.
To establish invariance of the corresponding spaces under inner adjacencies, we need a similar method to interchange inner
and neutral adjacencies. 

On $\mathcal{C}_5^N$, the neutral adjacency operator is $(A_0 f)(w)=\sum_{d_k(w)=2} f(\tilde{w}_k)$. 
When $A_+=A_{(p,q)\to (p-1,q+1)}$,
 $A_+f(w)=\sum_{d_\nu(w)=2} f(w_\nu^{-})$, so 
\begin{multline*}A_0A_+f(w)=\sum_{d_k(w)=2} A_+f(\tilde{w}_k)=\sum_{d_k(w)=2}  \sum_{d_\nu(w)=2} f((\tilde{w}_k)_\nu^{-})\\
= \sum_{d_\nu(w)=2}  \sum_{d_k(w)=2} f((\tilde{w}_k)_\nu^{-}) 
= \sum_{d_\nu(w)=2} [ f((\tilde{w}_\nu)_\nu^{-}) +\sum_{k\neq \nu, d_k(w)=2} f((\tilde{w}_k)_\nu^{-})] \\
= \sum_{d_\nu(w)=2} [ f((\tilde{w}_\nu)_\nu^{-}) +(A_0 f)(w_\nu^{-})]
=(A_+ A_0 f)(w)+\sum_{d_\nu(w)=2}  f((\tilde{w}_\nu)_\nu^{-}) \, .
\end{multline*}
We refer to the commutator 
$A_0A_+-A_+A_0$ in the form $A_+=A_{(p,q)\to (p-1,q+1)}$ that maps
$f$ to $(\widetilde{A_+}f)(v):=
\sum_{\nu:d_\nu(v)=2} (\rho_\nu f)(v_\nu^-)$ as \emph{twisted outer adjacency} since, without the reflections,
$(A_+f)(v)=\sum_{\nu:d_\nu(v)=2} f(v_\nu^-)$.

If 
$ g(w)= \widetilde{A_+} f(w)=\sum_{d_\nu(w)=2}  f((\widetilde{w_\nu^{-}})_\nu) $, $w\in \Sigma_{p-1,q+1}$, then
\begin{equation}\label{eq:aminusg1} (A_-g)(v)=\sum_{\sum_{\mu: d_\mu(v)=1}}\sum_{\nu: d_\nu(v_\mu^+)=2}  f((\widetilde{(v_\mu^+)_\nu^-})_\nu),\quad v\in\Sigma_{p,q}\, .
\end{equation}
But
\[ \sum_{\mu: d_\mu(v)=1} \sum_{d_\nu(v_\mu^+)=2}  f((\widetilde{(v_\mu^+)_\nu^-})_\nu) =
\sum_{\mu: d_\mu(v)=1}\Bigl[f((\widetilde{(v_\mu^+)_\mu^-})_\mu)+ \sum_{\nu\neq\mu, d_\nu(v_\mu^+)=2}  f((\widetilde{(v_\mu^+)_\nu^-})_\nu)) \Bigr]
\]
\begin{equation}\label{eq:aminusg2} =\sum_{\mu: d_\mu(v)=1}\Bigl[f(\tilde{v}_\mu) + \sum_{ d_\nu(v)=2}  f((\widetilde{(v_\mu^+)_\nu^-})_\nu) \Bigr] 
=R_1f(v)+\sum_{\mu: d_\mu(v)=1}\sum_{ d_\nu(v)=2} f((\widetilde{(v_\mu^+)_\nu^-})_\nu) 
\end{equation}
since $d_\mu(v)=1$, $d_\nu(v_\mu^+)=2$ and $\nu\neq \mu$ imply $d_\nu(v)=2$.

For each fixed $\nu$ in the outer sum of 
\begin{equation}\label{eq:twisted_sum}
\sum_{d_\nu(v)=2}   \sum_{\mu: d_\mu(v)=1} f((\widetilde{(v_\mu^+)_\nu^-})_\nu) 
=\sum_{d_\nu(v)=2}   \sum_{\mu: d_\mu(v)=1} f((\widetilde{(v_\nu^-)_\mu^{+}})_\nu)  ,
\end{equation}
the vertices $v\in\Sigma_{p,q} $ and $\{(v_\nu^{-})_\mu^+:d_\mu(v)=1\}$ together form the set
of $(p,q)$-outer neighbors of $v_\nu^-\in \Sigma_{p+1,q-1}$ ($d_\mu(v)=1$ implies $\mu\neq \nu$). 
Thus, the inner sum on the right of (\ref{eq:twisted_sum}), without reflections, would amount to $-f(v)+(A_-f)(v_\nu^-)$.
Accounting for reflections, the inner sum (for fixed $\nu$) is $-f(\tilde{v}_\nu)+(\rho_\nu A_-f)(v_\nu^-)$.  
Summing over $\{\nu: d_\nu(v)=2\}=\{\nu: d_\nu(v_\nu^-)=1\}$, the expression  (\ref{eq:twisted_sum}) is then equal  to
\begin{equation}\label{eq:twistedsum2}
-(A_0 f)(v)+\sum_{\nu:d_\nu(v)=2} (\rho_\nu A_-f)(v_\nu^-)\, .
\end{equation}

Combining (\ref{eq:aminusg1}), (\ref{eq:aminusg2}),  and (\ref{eq:twistedsum2}) gives the following.
\begin{theorem} \label{thm:neutral_commutator_C5} Let $f$ be supported in $\Sigma_{p,q}$. 
Denote $A_-=A_{(p-1,q+1)\to (p,q)}$ and $A_+=A_{(p,q)\to (p-1,q+1)}$. Then 
\begin{equation}\label{eq:aminustwistedaplus} (A_-(A_{0} A_{+}-A_{+}A_{0})) f(v) =((R_1 - A_0) f)(v) + \sum_{\nu:d_\nu(v)=2} (\rho_\nu A_-f)(v_\nu^-)
\end{equation}
considered as operators on and between $\Sigma_{p,q}$ and $\Sigma_{(p-1,q+1)}$.
\end{theorem}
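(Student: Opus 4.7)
The plan is to treat this theorem as a consolidation of the three displayed computations that immediately precede it in the text, so the proof amounts to carefully stitching them together while tracking every reflection. I would begin by using the calculation just before the statement, which shows that on a vertex $w\in\Sigma_{p-1,q+1}$,
\[ (A_0A_+ f)(w)-(A_+A_0f)(w)=\sum_{d_\nu(w)=2} f((\tilde w_\nu)_\nu^-) = \widetilde{A_+}f(w),\]
i.e., the commutator $A_0A_+-A_+A_0$ equals the twisted outer adjacency $\widetilde{A_+}$. So the left side of the theorem reduces to $A_-\widetilde{A_+}f(v)$, which is exactly the expression evaluated in (\ref{eq:aminusg1}).

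Next I would unpack (\ref{eq:aminusg1}) by separating the diagonal contribution $\mu=\nu$ from the off-diagonal ones. For $v\in\Sigma_{p,q}$ and $\mu$ with $d_\mu(v)=1$, the inner summation over $\nu$ with $d_\nu(v_\mu^+)=2$ decomposes into the single term $\nu=\mu$, for which $(\widetilde{(v_\mu^+)_\mu^-})_\mu=\tilde v_\mu$, plus the remaining terms with $\nu\neq\mu$, which (using $d_\mu(v)=1$ and $d_\nu(v_\mu^+)=2$, $\nu\neq\mu$, which forces $d_\nu(v)=2$) reindex as $\nu$ running over $\{d_\nu(v)=2\}$. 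Summing the diagonal piece over $\mu$ produces exactly $R_1f(v)$, giving (\ref{eq:aminusg2}).

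The remaining off-diagonal double sum is (\ref{eq:twisted_sum}). Here I would swap the order of summation so that $\nu$ with $d_\nu(v)=2$ is outermost, and use the identity $(v_\mu^+)_\nu^-=(v_\nu^-)_\mu^+$ (valid because $\mu\neq\nu$). For each fixed $\nu$, the key combinatorial observation is that $v$ together with the collection $\{(v_\nu^-)_\mu^+: d_\mu(v)=1\}$ enumerates all $(p,q)$-outer neighbors of the vertex $v_\nu^-\in\Sigma_{p+1,q-1}$. Hence, if the reflections were absent, the inner sum would be $(A_-f)(v_\nu^-)-f(v)$; including the outer reflection $(\cdot)_\nu$ on each summand and noting that $\rho_\nu$ commutes with every $(\cdot)_\mu^+$ when $\mu\neq\nu$, the inner sum equals $(\rho_\nu A_-f)(v_\nu^-)-f(\tilde v_\nu)$. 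Summing $-f(\tilde v_\nu)$ over $\nu$ with $d_\nu(v)=2$ gives exactly $-(A_0 f)(v)$, producing (\ref{eq:twistedsum2}).

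Adding the contributions from (\ref{eq:aminusg2}) and (\ref{eq:twistedsum2}) yields (\ref{eq:aminustwistedaplus}). The main obstacle I expect is purely bookkeeping: one must consistently distinguish the four coordinate operations (reduce, extend, reflect in a level-two coordinate, reflect in a level-one coordinate), verify the commutations $(v_\mu^+)_\nu^-=(v_\nu^-)_\mu^+$ and $\rho_\nu(v_\mu^+\cdots)=(v_\mu^+)(\rho_\nu\cdots)$ when $\mu\neq\nu$ (a special case of Lem.~\ref{lem:extension-reflection}), and correctly assign each ``missing'' diagonal term—$f(\tilde v_\nu)$ above—to the right coordinate direction so that it cancels against a term in $A_0f$ rather than being double-counted in $R_1 f$.
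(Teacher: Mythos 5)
Your proposal is correct and follows essentially the same route as the paper: the paper's proof is precisely the concatenation of the displayed computations identifying $A_0A_+-A_+A_0$ with the twisted outer adjacency $\widetilde{A_+}$, then combining (\ref{eq:aminusg1}), (\ref{eq:aminusg2}), and (\ref{eq:twistedsum2}), which is exactly the stitching you describe. The bookkeeping points you flag (the diagonal term $f(\tilde v_\mu)$ giving $R_1f$, the reindexing $(v_\mu^+)_\nu^-=(v_\nu^-)_\mu^+$ for $\mu\neq\nu$, and the missing-neighbor term $-f(\tilde v_\nu)$ assembling into $-(A_0f)(v)$) are the same ones the paper relies on.
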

The formula (\ref{eq:aminustwistedaplus}) can be rewritten as $A_- \widetilde{A_+} f= (R_1-A_0) f+ \widetilde{A_+} A_- f$ where, on the left,
$A_-=A_{(p-1,q+1)\to (p,q)}$ and, on the right, $A_-=A_{(p,q)\to (p+1,q-1)}$. 
The composition $A_- \widetilde{A_+}$ simplifies in some cases.  For example, when  $f$ itself is in the kernel of $A_-$,
one also has $\rho_\nu A_- f=0$ for each $\nu $ so $ \sum_{\nu:d_\nu(v)=2} (\rho_\nu A_-f)(v_\nu^-)=0$ for each $v\in\Sigma_{p,q}$.
Consider next a case in which $f=A_+h$ where $h$ is supported in $\Sigma_{p+1,q-1}$.  Then 
\begin{equation}\label{eq:caseaplush} A_-f=A_-A_+h=[A_-,A_+]h+A_+A_- h =m(p+1,q-1)h +R_1h +(A_+A_-)h
\end{equation}
where $m(p,q)=(2(N-q)-3p)$ by Thm.~\ref{thm:commutator_C5}.

\bigskip

\begin{theorem}  \label{thm:adjacency_invariant_C5} Denote by $\mathcal{W}_{p,q,\lambda,0}$ the space of  vertex functions  on 
$\mathcal{C}_5^N$ supported in $\Sigma_{p,q}$ such that $f$ is in the kernel
of each subadjacency of $A_-$, is a $\lambda$-eigenvector of $R_1$, and is an eigenvector of $A_0$.  
Let $\mathcal{V}_{p,q,\lambda}$ consist of those vertex functions that are sums of
multiples of images of elements of $\mathcal{W}_{p,q,\lambda,0}$ under a sequence of compositions of  subadjacencies of $A_0$ and $A_+$.
Then $\mathcal{V}_{p,q,\lambda}$ is adjacency invariant.
\end{theorem}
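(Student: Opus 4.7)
The proof splits along $A = A_+ + A_0 + A_-$, and I check invariance of $\mathcal{V}_{p,q,\lambda}$ under each summand separately. Invariance under $A_+$ and $A_0$ is immediate from the definition: any sum of images of elements of $\mathcal{W}_{p,q,\lambda,0}$ under words in subadjacencies of $A_+$ and $A_0$ remains such after a further postcomposition with a subadjacency of $A_+$ or $A_0$. Thus the substantive content is showing $A_-$-invariance.

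My plan for $A_-$ is to induct on the length $k$ of a generating word $W = B_k B_{k-1} \cdots B_1 f$, where $f \in \mathcal{W}_{p,q,\lambda,0}$ and each $B_i$ is a subadjacency of $A_+$ or $A_0$. The base case $k=0$ is precisely the hypothesis $A_- f = 0$. For the inductive step I write $W = B_k W'$ and analyze $A_- W = B_k A_- W' + [A_-, B_k] W'$. If $B_k$ is a subadjacency of $A_+$, then Thm~\ref{thm:commutator_C5} gives $[A_-, A_+] = (2(N-q') - 3p') I + R_1$ on the level $\Sigma_{p',q'}$ supporting $W'$; the key observation is that $W'$ is itself an $R_1$-eigenvector with a known eigenvalue, determined by $\lambda$, by Lem~\ref{lem:eigen_reflection_5} applied inductively to each subadjacency of $A_+$, and by a direct verification (reflections in distinct coordinates commute) that each subadjacency of $A_0$ commutes with $R_1$ and so preserves its eigenspaces. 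Consequently $[A_-, B_k] W'$ is a scalar multiple of the shorter word $W'$, while $B_k A_- W'$ lies in $\mathcal{V}_{p,q,\lambda}$ by induction and by the already-established $A_+$-invariance.

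If $B_k$ is instead a subadjacency of $A_0$, I use the identity $A_0 A_+ = A_+ A_0 + \widetilde{A_+}$ (defining the twisted outer adjacency) together with Thm~\ref{thm:neutral_commutator_C5}, which gives $A_- \widetilde{A_+} = (R_1 - A_0) + \widetilde{A_+} A_-$. These allow me to replace $A_- A_0 W'$ with $A_0 A_- W'$ modulo correction terms built from $R_1$, $A_0$, $\widetilde{A_+}$, and $A_-$ applied to subwords of $W'$; each such correction then reduces by the inductive hypothesis to a member of $\mathcal{V}_{p,q,\lambda}$. When an inner portion of $W'$ terminates in $A_+ h$ with $h \in \mathcal{W}_{p,q,\lambda,0}$, the simplification in (\ref{eq:caseaplush}) keeps the resulting expression inside the claimed space.

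The main obstacle is the bookkeeping in the second case: the twisted outer adjacency $\widetilde{A_+}$ does not collapse to a scalar when combined with $A_-$, and the $(R_1 - A_0)$ contribution from Thm~\ref{thm:neutral_commutator_C5} must be unwound through an arbitrary composition of prior $A_0$'s and $A_+$'s. The saving features are that $R_1$ acts by the scalar $\lambda'$ on every intermediate expression (since all subadjacencies of $A_+$ and $A_0$ preserve or predictably shift the $R_1$-eigenvalue) and that $A_0$ likewise acts on each level by a controllable eigenvalue inherited from $f$; together these ensure that all correction terms are shorter generating words, closing the induction.
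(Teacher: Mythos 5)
Your argument is essentially the paper's own: invariance under $A_+$ and $A_0$ holds by definition, and $A_-$-invariance is proved by induction on the length of the generating word, using Thm.~\ref{thm:commutator_C5} together with Lem.~\ref{lem:eigen_reflection_5} (plus the commutation of coordinate reflections with $R_1$) when the last letter is an outer subadjacency, and Thm.~\ref{thm:neutral_commutator_C5}, the twisted outer adjacency $\widetilde{A_+}=[A_0,A_+]$, and (\ref{eq:caseaplush}) when it is neutral; this matches the paper's outline step for step. One correction to your closing paragraph: the claim that ``$A_0$ likewise acts on each level by a controllable eigenvalue inherited from $f$'' is false on $\mathcal{C}_5^N$ --- only $f\in\mathcal{W}_{p,q,\lambda,0}$ itself is required to be an $A_0$-eigenvector, and its images under outer subadjacencies are in general not (this failure is exactly why the definition of $\mathcal{V}_{p,q,\lambda}$ interleaves $A_0$'s, in contrast with $\mathcal{C}_3^N$, where Lem.~\ref{lem:eigen_outer_3} does give preservation of $A_0$-eigenvalues). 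Fortunately your induction does not need that claim: correction terms involving $A_0$ or $\widetilde{A_+}$ need not be scalar multiples of shorter words; it suffices that they are images, under subadjacencies of $A_0$ and $A_+$, of quantities already known to lie in $\mathcal{V}_{p,q,\lambda}$ (for instance $A_-$ applied to strictly shorter words, covered by the inductive hypothesis, or $(R_1-A_0)$ applied to a shorter word), and $\mathcal{V}_{p,q,\lambda}$ is closed under such images by construction. Only the $R_1$-eigenvalue must be tracked through the word, which is precisely what Lem.~\ref{lem:eigen_reflection_5} and the commutation of $R_1$ with the level-two reflections provide.
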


\begin{proof}[Outline of Proof] If $\mathcal{V}_{p,q,\lambda}$ did not include applications of $A_0$ in its definition (only applications of $A_+$),
then the proof would essentially parallel that of Thm.~\ref{thm:adjacency_invariant_C3}.
By definition, $\mathcal{V}_{p,q,\lambda}$
is invariant under subadjacencies of $A_0$ and $A_+$.  So it suffices to show that it is invariant under subadjacencies of $A_-$.
The proof is by induction along similar lines as those of 
Thms~\ref{thm:adjacency_invariant_C3} and \ref{thm:adjacency_invariant_C4}. We argue that if a composition of $k$ subadjacencies of $A_+$ and $A_0$
gets mapped back to $ \mathcal{V}_{p,q,\lambda}$ by the appropriate subadjacencies of $A_-$, then so does any composition of
$k+1$ subadjacencies of $A_+$ and $A_0$. The base case ($k=1$) considers $A_- g$ where $g=A_+ f$ or $g=A_0 f$, $f\in \mathcal{W}_{p,q,\lambda,0}$.
The case $g=A_0 f$ is trivial since $f$ is an eigenvector of $A_0$ and in the kernel of $A_-$. The case $g=A_+f$ follows from 
Thm.~\ref{thm:commutator_C5} and the hypotheses that $R_1 f=\lambda f$ and $A_-f=0$.

Let $g_{r,s}$ be the component of $g\in \mathcal{V}_{p,q,\lambda}$ supported in $\Sigma_{r,s}$, $r\geq p$ and $s\geq q$.
We can assume that $g_{r,s}$ has one of the forms (i) $A_{(r-1,s)\to (r,s)} g_{r-1,s}$, (ii) $A_{(r+1,s-1)\to (r,s)} g_{r+1,s-1}$, or $A_0$ in the form
$A_{(r,s)\to (r,s)}$ applied to one of these forms. 
Consider the specific case in which $g_{r,s} = A_{(r-1,s)\to (r,s) } g_{r-1,s}$.
Then
\begin{multline*}
A_{(r,s)\to (r-1,s) } g_{r,s} = (A_{(r,s)\to (r-1,s) }  A_{(r-1,s)\to (r,s) }   -A_{(r-2,s)\to (r-1,s) }  A_{(r-1,s)\to (r-2,s) }  )   g_{r-1,s} \\
+
A_{(r-2,s)\to (r-1,s) }  A_{(r-1,s)\to (r-2,s) } g_{r-1,s}\\
=(C+
A_{(r-2,s)\to (r-1,s) }  A_{(r-1,s)\to (r-2,s) }) g_{r-1,s}\\
=((2(N-s)-3(r-1))I+R_1)  g_{r-1,s}+
A_{(r-2,s)\to (r-1,s) }  A_{(r-1,s)\to (r-2,s) }) g_{r-1,s}\\
=[(2(N-s)+3(r-1)+\lambda_{r-1,s}] g_{r-1,s}+A_{(r-2,s)\to (r-1,s) }  A_{(r-1,s)\to (r-2,s) }) g_{r-1,s}
\end{multline*}
by Thm.~\ref{thm:commutator_C5}.  Here, $\lambda_{r,s}$ is defined by $R_1  g_{r,s} =\lambda_{r,s} g_{r,s}$ according to the definition of $g$ as an image of a
fixed number of outer adjacencies applied to $
f\in\mathcal{W}_{p,q,\lambda,0}$, invoking Lem.~\ref{lem:eigen_reflection_5} in the corresponding number of instances, and  observing that $R_1$ commutes with $A_0=R_2$, the sum of reflections in level-two coordinates.
On the other hand, if we assume the induction hypothesis that any lower level components of 
$g\in \mathcal{V}_{p,q,\lambda}$
 are mapped back into $\mathcal{V}_{p,q,\lambda}$ under
inner adjacency, then $A_{(r-1,s)\to (r-2,s) }) g_{r-1,s}\in \mathcal{V}_{p,q,\lambda}$ and then so is $A_{(r-2,s)\to (r-1,s) }  A_{(r-1,s)\to (r-2,s) }) g_{r-1,s}$.
In this case, then, $A_{(r,s)\to (r-1,s) } g_{r,s}\in \mathcal{V}_{p,q,\lambda}$.
A corresponding argument applies to prove that  $A_{(r,s)\to (r+1,s-1)} g_{r,s}\in  \mathcal{V}_{p,q,\lambda}$. 

 To prove that 
$A_{(r,s)\to (r-1,s) } $ and $A_{(r,s)\to (r+1,s-1)} $ map $A_0 g_{r,s}$ back to $\mathcal{V}_{p,q,\lambda}$,
first  write $A_- A_0 g_{r,s}=A_-A_0 A_+ g_{r-1,s}=A_-[A_0A_+-A_+A_0]g_{r-1,s}+A_-A_+A_0 g_{r-1,s}$.
The term $A_-A_+A_0 g_{r-1,s}$ is shown to belong to $\mathcal{V}_{p,q,\lambda}$
along the same lines as above with $g_{r-1,s}$ replaced by $A_0 g_{r-1,s}$. 
The term $A_-[A_0A_+-A_+A_0]g_{r-1,s}$ is addressed using Thm.~\ref{thm:neutral_commutator_C5}.
Specifically, by (\ref{eq:aminustwistedaplus}),
\begin{equation}\label{eq:aminustwistedaplusgrs} 
(A_-(A_{0} A_{+}-A_{+}A_{0})) g_{r-1,s}(v) =((R_1 - A_0)  g_{r-1,s})(v) + \sum_{\nu:d_\nu(v)=2} (\rho_\nu A_-  g_{r-1,s})(v_\nu^-) \, .
\end{equation}
As above, $(R_1 - A_0)  g_{r-1,s}\in \mathcal{V}_{p,q,\lambda}$.  We can assume $g_{r-1,s}=A_+h$
for some $h\in \mathcal{V}_{p,q,\lambda}$ where 
$h$ is the image of a composition of strictly fewer outer or neutral adjacencies than $g_{r-1,s}$.  It follows from the induction hypothesis that $A_-A_+ h\in \mathcal{V}_{p,q,\lambda}$
(see also (\ref{eq:caseaplush})),
and then from the definition of $ \mathcal{V}_{p,q,\lambda}$  that  $\widetilde{A}_+ A_-  g_{r-1,s}$, the sum on the right in (\ref{eq:aminustwistedaplusgrs}), also belongs to 
$ \mathcal{V}_{p,q,\lambda}$, since $\widetilde{A}_+ =[A_0,\, A_+]$.
%
\end{proof}

\section{Application to spatio--spectral limiting\label{sec:ssl}}

The current work is motivated in part by an effort to develop a theory parallel to the Bell Labs theory of time and band limiting 
\cite{landau_pollak_II,landau_pollak_III,slepian_pollak_I,slepian_IV,slepian_V_1978}
 in the context of certain finite graphs, e.g.,  \cite{Hogan20172,hogan_lakey_gencube}.
The Bell Labs theory studies eigenfunctions and eigenvalues of the operators $P_\Omega Q_T$ where $(Qf)(t)=f(t)\mathbbm{1}_{[-T,T]}$
and $ (P_\Omega f)(t) =(\mathcal{F}^{-1} \mathbbm{1}_{[-\Omega/2,\Omega/2]}\mathcal{F} f)(t)$ where $(\mathcal{F} f)(\xi)=\int_{-\infty}^\infty f(t)\, e^{-2\pi i t\xi}\, dt$ is the Fourier transform on $\mathbb{R}$.
 The theory takes advantage, to an extent, of the self-dual nature of the Fourier transform on $\mathbb{R}$. 
 As finite abelian groups, $\mathbb{Z}_m^N$ also have self-dual Fourier transforms that can be realized as Kronecker powers of the matrix with $k$th column $e^{2\pi i j k/m}$, $j,k=0,\dots, m-1$.
 Spatio--spectral limiting operators on $\mathbb{Z}_m^N$ are analogues of $P_\Omega Q_T$, where an analogue $Q$ of $Q_T$ truncates spatially on $\mathcal{C}_m^N$ by
 multiplying a vertex function $f$ by the characteristic function of a symmetric neighborhood of the origin, and an analogue $P$ of $P_\Omega$ truncates
 in the graph Fourier transform domain by projecting onto the span of certain eigenvectors of the graph Laplacian on $\mathcal{C}_m^N$.
 It is typical to truncate to the space of vectors that are \emph{bandlimited} to eigenvalues below a certain threshold, e.g., \cite{pesenson_2008}, 
 but in the graph setting it can also be useful to truncate to other eigenspaces of the graph Fourier transform, 
 specifically in questions of recovery from (sub)samples, e.g. \cite{sandryhaila_etal_2015}.
 
 We consider a very specific example here of spatio--spectral limiting in $\mathcal{C}_5^4$, which has 625 vertices. We order the vertices first
 by increasing distance $d(v)$ to the identity element of $\mathbb{Z}_5^4$, then truncate by applying the diagonal matrix $Q$ equal to one
 on the indices of the vertices of distance at most 3 from the origin.  The truncated adjacency matrix is shown in Fig.~\ref{fig:adjacency_m5N4_full_partialv2}.
 We then restrict in the spectral domain also by applying the graph Fourier transform  $F$ (indexed with respect to the same ordering), 
 multiplying by the same matrix $Q$, and taking the inverse Fourier transform. We write $P=F^{-1} Q F$.
 
 The cardinalities of  $\Sigma_{p,q}$ ($p+2q\leq 3$) are listed in Tab.~\ref{tab:level_cardinality}.
 Altogether, there are $1+8+24+8+32+48=121$ vertices $v$ such that $d(v)\leq 3$. The range of $Q$ thus has dimension 121. The operator $P$
 is injective on this range, so the  self-adjoint operator $PQ$ has 121 linearly independent eigenvectors.  
 In Figs.~\ref{fig:m5N4K3_level_vectors}  and \ref{fig:m5N4K3_evecs} we plot the Fourier transforms of the eigenvectors 
 $PQ$  (equivalently, the eigenvectors of $ F PQ F^{-1}$) arranged by the base support $\Sigma_{p,q}$.  As explained in 
 \cite{hogan2018spatiospectral}, 
$FPQ$  is a polynomial in the adjacency operator and  these eigenvectors lie
 in spaces of the form $\mathcal{V}_{p,q,\lambda}$. The vectors were computed using the {\tt svd} function in {\tt matlab}.
  
 We refer to the vectors plotted in Fig.~\ref{fig:m5N4K3_level_vectors} as \emph{level vectors}. They are constant on the distance level 
 sets $\Sigma_{p,q}$.
 Their span is that of the vectors in the space $\mathcal{V}_{0,0}$ that are nonzero on $\Sigma_{0,0}$. 
 The dimension (six) is the number of distinct levels 
 (neutral adjacencies in Thm.~\ref{thm:adjacency_invariant_C5} preserve the property of being constant on each level), and thus the
 number of potential nonzero coefficients $c_k$ of a vector in $\mathcal{V}_{0,0}$ vanishing on $\{v: d(v)>3\}$.
 Consider next the vectors whose base support lies in $\Sigma_{1,0}$.
 Here there are six eigenspaces each of dimensions 3 and 4 (see Tab.~\ref{tab:eigenspaces}). 
 The number of eigenspaces of  $FPQ$  
  of each dimension is equal to the number of free coefficients of nonzero outer
 subadjacencies defining those $\mathcal{V}_{1,0}$ vectors vanishing on $\{v: d((v)>3\}$. 
 The dimension of each eigenspace is determined by the conditions of Thm.~\ref{thm:adjacency_invariant_C5}. 
 The set $\Sigma_{1,0}$ has eight vertices coming in coordinate pairs. 
 Eigenspaces of the reflection $R_1$ on vectors supported in $\Sigma_{1,0}$ thus have dimension four. Eigenvectors of $R_1$
 with eigenvalue $-1$ automatically have average value zero so are  orthogonal to constant vectors on $\Sigma_{1,0}$ (hence in kernel of $A_-$), 
 but for eigenvectors of $R_1$
 with eigenvalue $+1$, having average zero is an additional constraint on the coefficients of $g\in \mathcal{V}_{1,0,1}$ in its expansion in compositions of 
 $A_+$ and $A_0$ on $f\in \mathcal{W}_{1,0,1,0}$, in order to be orthogonal to constants. 
This implies that the $1$-eigenspaces of $R_1$ on $\Sigma_{1,0}$ in the kernel of $A_0$ can have dimension at most three.
 The dimensions of the eigenspaces of $FPQ$ based on the other levels $\Sigma_{p,q}$ listed in Tab.~\ref{tab:eigenspaces}
 can be explained along similar lines.  There are no eigenvectors with base support in $\Sigma_{0,1}$.

 \begin{tiny}
\begin{table}[tbhp]
{\footnotesize
\caption{\label{tab:level_cardinality}  Cardinalities of distance level sets}
}
\begin{center}
\bgroup
\def\arraystretch{1.3}
\begin{tabular}{|c |c|c|}
\hline 
Base& Cardinality&Indices   \\ \hline \hline
$\Sigma_{0,0} $ &1 & 1\\ \hline
$\Sigma_{1,0}$ &$\binom{4}{1}\times 2=8$& 2--9\\ \hline
$\Sigma_{2,0} $ &$\binom{4}{2}\times4=24$&10--33\\ \hline
$\Sigma_{0,1} $ &$\binom{4}{1}\times 2=8$&34--41\\ \hline
$\Sigma_{3,0} $ & $\binom{4}{3}\times 8=32$&42--73\\ \hline
$\Sigma_{1,1} $ &$\binom{4}{1}\times \binom{3}{1}\times 4=48$ &74--121\\ \hline
\hline
\end{tabular}
\egroup
\end{center}
\end{table}%
\end{tiny}
  
\begin{tiny}
\begin{table}[tbhp]
{\footnotesize
\caption{\label{tab:eigenspaces}  Eigenspaces of $PQ$, $m=5$, $N=4$, $K=3$}
}
\begin{center}
\begin{tabular}{|c |c|c|c|}
\hline 
Base& Dim (\#)& $R_1$ eigenvalue  & eigenvector indices \\ \hline \hline
$\Sigma_{0,0} $ &1 (6) &0  & 1,9,29,92,93,121 \\ \hline
$\Sigma_{1,0}$ &4 (6) &$ -1$&5-8,30-33,37-40,59-62,110-113,117-120\\ \hline
$\Sigma_{1,0} $ &3 (6)& 1&2-4,10-12,49-51,56-58, 89-91,114-116\\ \hline
$\Sigma_{2,0} $ &6 (2) & $-2$& 23-28,96-101\\ \hline
$\Sigma_{2,0} $ &8 (3) &0 & 15-22,41-48,102-109\\ \hline
$\Sigma_{2,0} $ &2 (2) & 2&13-14,94-95\\ \hline
$\Sigma_{3,0} $ &4 (1) &$-3$ & 52-55\\ \hline
$\Sigma_{3,0} $ &18 (1) &$-1$ &63-80\\ \hline
$\Sigma_{3,0} $ &8 (1) &1 &81-88\\ \hline
$\Sigma_{1,1} $ &3 (1) &$-1$&34-36\\ \hline
\hline
\end{tabular}
\end{center}
\end{table}%
\end{tiny}


  \begin{figure}[tbhp]
\centering 
\includegraphics[width=\textwidth,height=2.5in]{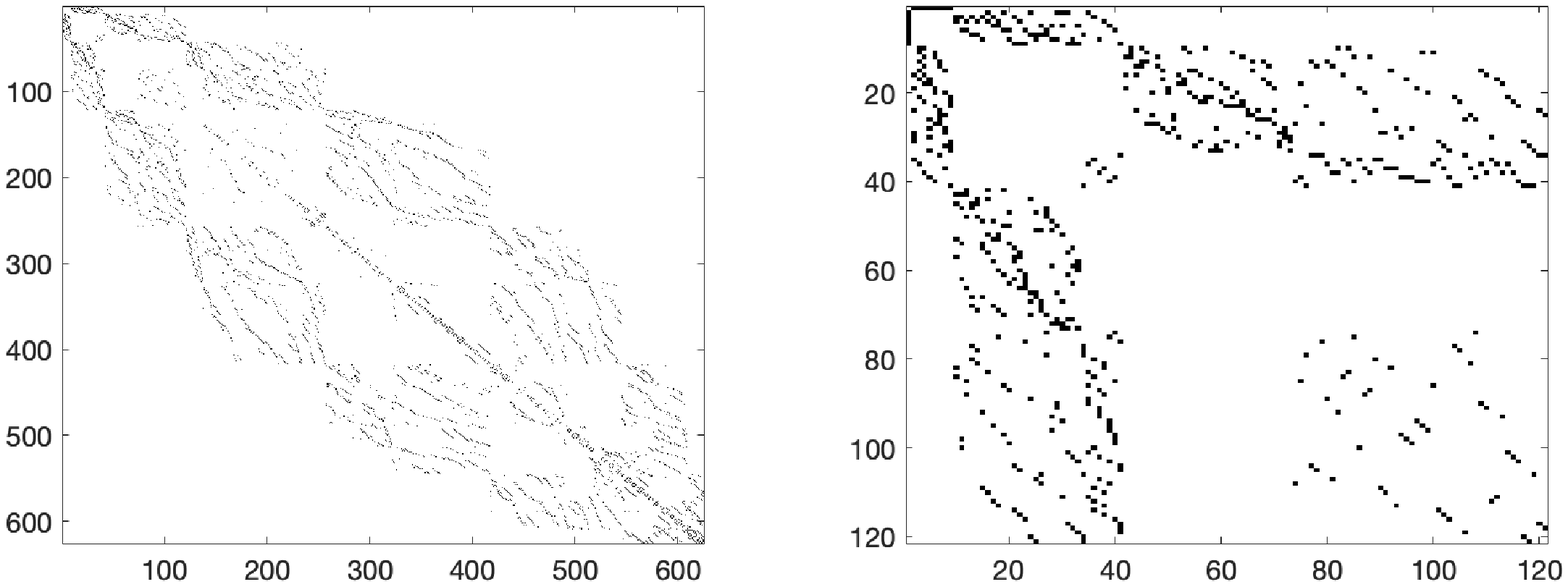}
\caption{\label{fig:adjacency_m5N4_full_partialv2} 
Adjacency matrix, $m=5$, $N=4$ (left) and principal minor of size 121 (right). Vector indices are ordered by increasing level $p+2q$, and by increasing $q$ for fixed $p+2q$.
}
\end{figure}

  \begin{figure}[tbhp]
\centering 
\includegraphics[width=\textwidth,height=2.5in]{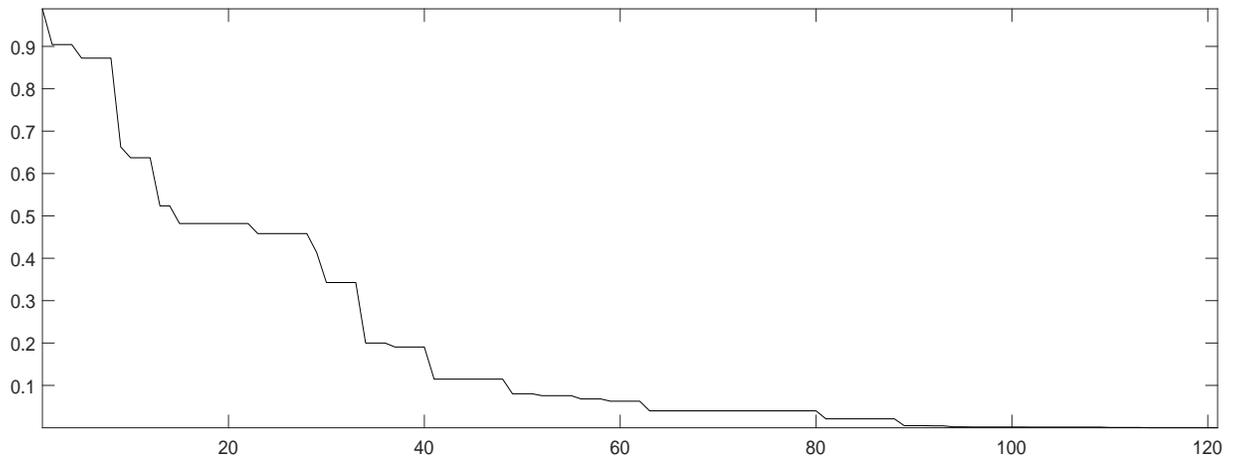}
\caption{\label{fig:m5N4K3_evals}
Nonzero eigenvalues of $PQ$, $m=5$, $N=4$, $K=3$. Intervals of fixed eigenvalue conform to indices of the eigenspaces listed in Tab.~\ref{tab:eigenspaces}}
\end{figure}

  \begin{figure}[tbhp]
\centering 
\includegraphics[width=\textwidth,height=2.5in]{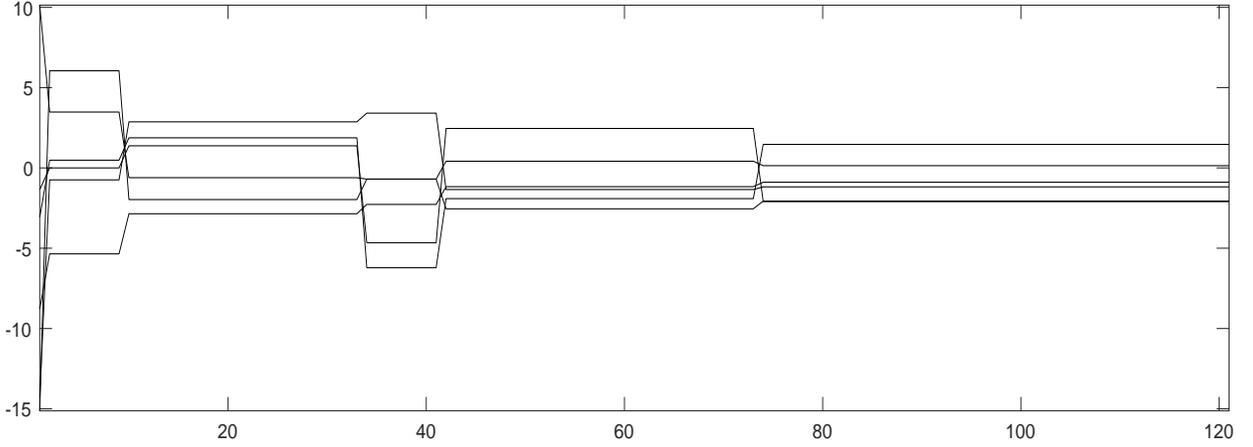}
\caption{\label{fig:m5N4K3_level_vectors}
Real parts of eigenvectors of the matrix $FPQ$, $m=5$, $N=4$, $K=3$. These are the vectors listed in Tab.~\ref{tab:eigenspaces} with base $\Sigma_{0,0}$.
They are constant on the level sets $\Sigma_{p,q}$}
\end{figure}

  \begin{figure}[tbhp]
\centering 
\includegraphics[width=\textwidth,height=3.5in]{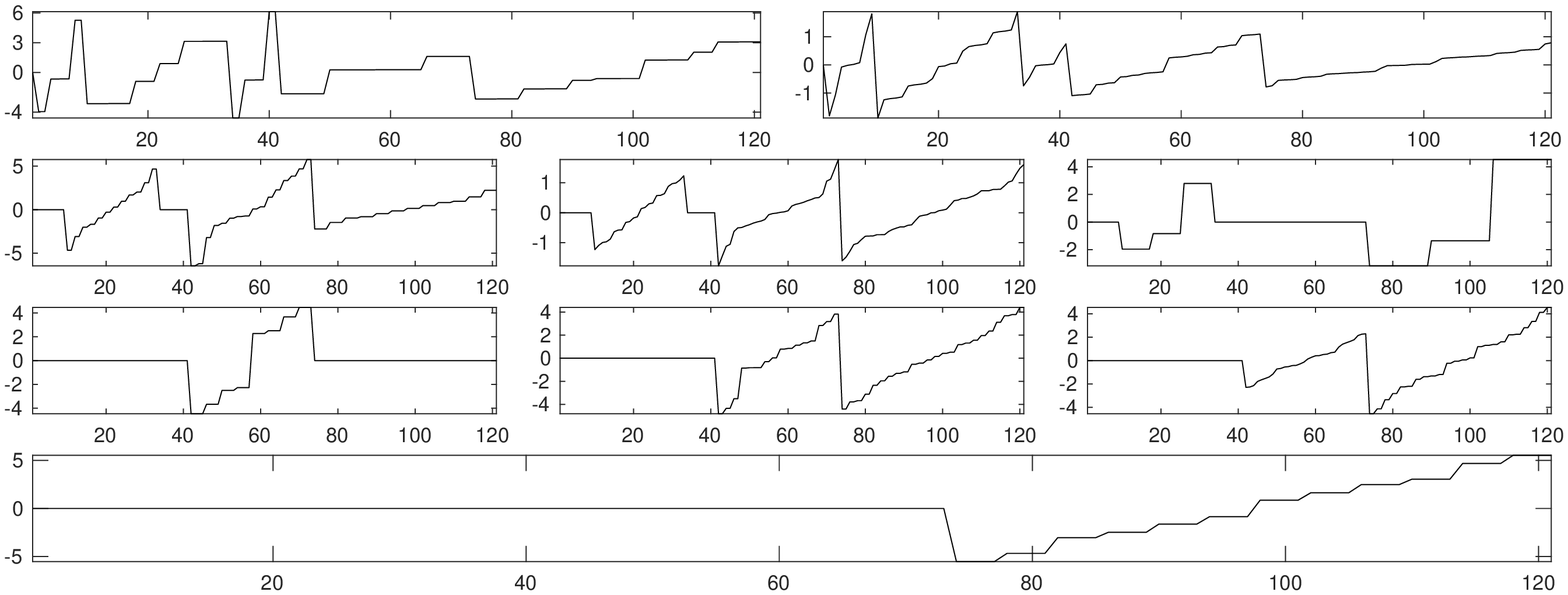}
\caption{\label{fig:m5N4K3_evecs} 
Partially sorted eigenvectors of the matrix $FPQ$, $m=5$, $N=4$, $K=3$. Real part of one eigenvector is plotted from each group listed in Tab.~\ref{tab:eigenspaces} 
corresponding to base level greater than zero. Values are sorted within each level set to make value symmetries more evident}
\end{figure}

\section{Conclusions\label{sec:conclusions}}
We have described certain spaces of vertex functions on products of cycles $\mathcal{C}_m^N$ for $m=3,4,5$ that are invariant under
the adjacency operators on these Cayley graphs.  In the cases $m=3,4$ these spaces are observed to be isomorphic to spaces of the 
form $\mathcal{W}_{L}\times \mathbb{C}^M$ where $\mathcal{W}_{L}$ is a space of vectors supported on a set $L$ of fixed path distance
to the identity of $\mathbb{Z}_m^N$, and the action of the adjacency operator can be expressed solely on the $\mathbb{C}^M$ factor.
The techniques can be extended to describe corresponding adjacency-invariant spaces on $\mathcal{C}_m^N$ for larger $N$ in terms of
inner- and-outer subadjacencies, and level-one and level $m/2-1$ reflections ($m$-even), or level-one reflections and level $(m-1)/2$ neutral adjacencies ($m$ odd).
Completeness properties---how to decompose the restriction of a vertex function to a neighborhood of the identity into a sum of terms, each of which
lies in one of the invariant spaces described here---will be addressed in future work 
(cf., \cite{hogan2018spatiospectral} for the $\mathbb{Z}_2^N$ case).

The results presented here, of course, do not readily extend to broad families of graphs. The methods should extend, with further complication, to powers of other small
graphs having ample symmetry, e.g., \cite{hogan_lakey_gencube}.

 \thispagestyle{empty}
 
 \bibliographystyle{amsplain}
\bibliography{boolean_eigenspace_refs.bib}

\end{document}